\title[Lanczos with compression for Lyapunov equations]{Lanczos with compression for symmetric matrix Lyapunov equations}
\author[A. A. Casulli]{Angelo A. Casulli$^\thankssymb{1}$}
\thanks{$^\thankssymb{1}$Gran Sasso Science Institute (GSSI), L'Aquila, (\href{mailto:angelo.casulli@gssi.it}{\nolinkurl{angelo.casulli@gssi.it}})}
\author[F. Hrobat]{Francesco Hrobat$^\thankssymb{2}$}
\thanks{$^\thankssymb{2}$Scuola Normale Superiore (SNS), Pisa (\href{mailto:francesco.hrobat@sns.it}{\nolinkurl{francesco.hrobat@sns.it})}}
\author[D. Kressner]{Daniel Kressner$^\thankssymb{3}$}
\thanks{$^\thankssymb{3}$École Polytechnique Fédérale de Lausanne (EPFL), Lausanne (\href{mailto:daniel.kressner@epfl.ch}{\nolinkurl{daniel.kressner@epfl.ch})}}
\DeclareFontFamily{U}{mathx}{}
\DeclareFontShape{U}{mathx}{m}{n}{<-> mathx10}{}
\DeclareSymbolFont{mathx}{U}{mathx}{m}{n}
\DeclareMathAccent{\widecheck}{0}{mathx}{"71}
\begin{document}

\begin{abstract}
This work considers large-scale Lyapunov matrix equations of the form $AX + XA = \vec{c}\vec{c}^T$,
where $A$ is a symmetric positive definite matrix and $\vec{c}$ is a vector. Motivated by the need to solve such equations in a wide range of applications, various numerical methods have been developed to compute low-rank approximations of the solution matrix $X$. In this work, we focus on the Lanczos method, which has the distinct advantage of requiring only matrix-vector products with $A$, making it broadly applicable. However, the Lanczos method may suffer from slow convergence when $A$ is ill-conditioned, leading to excessive memory requirements for storing the Krylov subspace basis generated by the algorithm. To address this issue, we propose a novel compression strategy for the Krylov subspace basis that significantly reduces memory usage without hindering convergence. This is supported by both numerical experiments and a convergence analysis. Our analysis also accounts for the loss of orthogonality due to round-off errors in the Lanczos process.

\end{abstract}

\maketitle

\section{Introduction}
Lyapunov matrix equations take the form 
$AX + XA^T = C$ for given matrices $A,C \in \R^{N \times N}$ and an unknown $X \in \R^{N \times N}$. During the last decades, a range of highly efficient solvers for such linear matrix equations have been developed; see~\cite{Sim} for an overview. In this work, we consider the particular case when $A$ is symmetric positive definite, and $C$ is symmetric positive semi-definite and of low rank. By the superposition principle, we may in fact assume that $C$ has rank $1$, that is, there is a vector $\vec{c} \in \R^N$ such that $C = \vec{c}\vec{c}^T$. It is well known that any such \emph{symmetric Lyapunov matrix equation}
\begin{equation}\label{eqn: lowlyapintro}
AX+XA = \vec{c}\vec{c}^T
\end{equation}
has a unique solution $X$, which is symmetric positive semi-definite.

Additionally, we suppose that $A$ is a large, data-sparse matrix, such that both the storage of $A$ and matrix-vector products with $A$ are relatively cheap, while -- for example -- the diagonalization of $A$
is computationally infeasible. Such large-scale Lyapunov equations arise in a number of applications, including control theory~\cite{Dat, Gaj}, model order reduction~\cite{Ant,Ben, Ben2}, as well as structured discretizations of  elliptic partial differential equations~\cite{Pal}.

Most known methods for solving~\cref{eqn: lowlyapintro} in the large-scale setting exploit the fact that the singular values of $X$ decay quickly to zero \cite{Bec3,Gru}. In turn, this makes it possible to aim at computing a memory-efficient, low-rank approximation of $X$. In particular, popular rational methods, such as implicit ADI and \emph{rational} Krylov subspace methods~\cite{MR2557293,Cas2, MR1920565,MR1742324}, are known to converge rapidly to accurate low-rank approximations of $X$. A major limitation of these approaches is that they require the solution of a shifted linear system with $A$ in every iteration, which may become expensive or even infeasible, especially when $A$ is only given implicitly in terms of its action on a vector.

When $A$ is accessed through matrix-vector products only, it is natural to consider (polynomial) Krylov subspace methods~\cite{Jai,Saa}. For symmetric $A$, the Lanczos process~\cite{Golub2013} constructs an orthonormal basis $\b Q_M$ of the Krylov subspace
\[
\pol_M(A,\vec{c}) = \{\vec{c}, A\vec{c}, \dots,A^{M-1}\vec{c} \}, \quad M \ll N,
\]
using a short-term recurrence. This process also returns the 
tridiagonal matrix $\b T_M := \b Q_M^TA \b Q_M$. The Lanczos method applied to the symmetric Lyapunov equation~\cref{eqn: lowlyapintro}
produces the approximation $X \approx \b Q_MX_M\b Q_M^T$ (in factored form),
where $X_M$ satisfies the $M\times M$ projected Lyapunov equation
\begin{equation}\label{eqn: lyapproj}
     \b T_MX_M + X_M\b T_M = \| \vec{c}\|_2^2\vec{e}_1\vec{e}_1^T.
\end{equation}
Thanks to the tridiagonal structure of $\b  T_M$, the solution
of the projected equation~\cref{eqn: lyapproj} can be cheaply computed (by, e.g., ADI), even for relatively large values of $M$.

A major drawback of the Lanczos method, compared to rational Krylov subspace methods, is its slow convergence for ill-conditioned $A$~\cite{Sim2}. In turn, a large value of $M$ may be needed to attain a low approximation error, which has several negative ramifications. The cost of reorthogonalization for ensuring
numerical orthogonality of $\b  Q_M$ grows quadratically with $M$. Even when reorthogonalization is turned off (which delays but does not destroy convergence; see Section~\ref{sec:finiteprec}), the need for storing $\b  Q_M$
in order to be able to return $\b  Q_MX_M\b  Q_M^T$ may impair the Lanczos method significantly.
Strategies for bypassing these excessive memory requirements include 
the two-pass Lanczos method from~\cite{Kre2} and the restarting strategy from~\cite{Kre}.

The two-pass Lanczos method~\cite{Kre2} first performs one pass of the Lanczos process (without reorthogonalization) to construct the matrix $\b  T_M$ \emph{without} storing $\b  Q_M$. After solving the projected equation~\cref{eqn: lyapproj} and computing a low-rank approximation $X_M \approx L_ML_M^T$, a second \emph{identical} pass of the Lanczos process is used to compute the product $\b  Q_ML_M$. As only two vectors are needed to define the Lanczos process, and the numerical ranks of $X_M$ and $X$ are usually similar, the memory required by this method is optimal -- on the level of what is needed anyway to represent the \emph{best} low-rank approximation of $X$. However, this desirable property comes at the expense of (at least) doubling the number of matrix-vector products.

The compress-and-restart strategy proposed in~\cite{Kre}, which also applies to nonsymmetric $A$, initially carries out a limited number of steps of the Krylov subspace method. The resulting approximation is refined by noticing that the  correction also satisfies a Lyapunov equation, with the right-hand side replaced by the residual. The solution to this correction equation is again approximated by carrying out a limited number of steps. These restarts are repeated until the desired accuracy is reached. One issue with this approach is that the rank of the right-hand side snowballs due to restarting. Repeated compression is used in~\cite{Kre} to alleviate this issue but, as we will see in
Section~\ref{sec:numexp}, it can still lead to a significant increase of execution time.

Limited-memory Krylov subspace methods, such as two-pass methods and restarting strategies, have also been proposed in the context of computing a matrix function $f(A)\vec c$; see~\cite{Gut2,Gut3} and the references therein. Recently, an approach has been proposed in~\cite{Cas} that repeatedly applies a rational approximation of $f$ to the tridiagonal matrices generated in the course of the Lanczos process in order to compress the Krylov subspace basis. In this work, we extend this approach from matrix functions to the Lyapunov equation~\cref{eqn: lyapproj}. Our extension relies on a different choice of rational approximation and other modifications of the method from~\cite{Cas} (see~\Cref{sec: algo} for a detailed discussion of the differences).

In a nutshell, Lanczos with compression proceeds as follows for the Lyapunov equation~\cref{eqn: lowlyapintro}:
suppose that the projected equation~\cref{eqn: lyapproj} is solved by a \emph{rational} Krylov subspace method. Typically, the size $k$ of the basis
$\b U_{M,k}$ involved in such a method is much smaller than $M$. The compressed subspace spanned by the $k$ columns of
$\b  Q_M \b U_{M,k}$ contains the essential part of $\b  Q_M$ needed for solving~\cref{eqn: lyapproj}. This simple observation will yield 
our reference method,~\Cref{alg: naive}, introduced and analyzed in~\Cref{sec: premnot}. One obvious flaw of this approach is that 
the product $\b  Q_M \b U_{M,k}$ still requires knowledge of the large Lanczos basis $\b  Q_M$ and, thus, does not decrease memory requirements. 
This flaw will be fixed; by exploiting the tridiagonal structure of $\b T_M$ and implicitly leveraging low-rank updates,
the matrix $\b  Q_M \b U_{M,k}$ is computed on the fly while storing only a small portion of $\b  Q_M$.
This yields our main method,~\Cref{alg: lyapcomp}, which is mathematically equivalent to Algorithm~\ref{alg: naive}.

Our main theoretical contributions are as follows: \Cref{thm:solbound} quantifies the impact of compression on the convergence of the Lanczos method, showing that already a modest number $k$ of Zolotarev poles in the rational approximation make this impact negligible. \Cref{sec:finiteprec} analyzes how the loss of orthogonality in the Lanczos basis, due to roundoff, influences convergence. First,~\Cref{thm:fplanczos} derives an error bound for the Lanczos method itself, which may be of independent interest. Second,~\Cref{thm:fplanczos} derives an error bound for Lanczos with compression~\Cref{thm:fplancomp}. Unless $A$ is extremely ill-conditioned, these error bounds predict convergence close to the convergence \emph{bounds} from~\cite[Section 2.3]{Bec} until the level of roundoff error is reached. 
\section{Lanczos method combined with rational approximation}\label{sec: premnot}

Many methods for solving large-scale Lyapunov equations, including all methods discussed in this work, belong to the general class of subspace projection methods~\cite{Jbi}. Given an orthonormal basis ${Q} \in \mathbb{R}^{N \times M}$ of an $M$-dimensional subspace with $M\ll N$, subspace projection
reduces the original equation~\cref{eqn: lowlyapintro} to the (smaller) $M\times M$ Lyapunov equation
\begin{equation} \label{eq:projectedequation}
 {Q}^T A{Q} Y + Y {Q}^T A {Q} = ({Q}^T \mathbf{c})({Q}^T \mathbf{c})^T.
\end{equation}
Once this projected equation is solved by, e.g., diagonalizing ${Q}^T A{Q}$, one obtains the rank-$M$ approximation $X \approx {Q} Y {Q}^T$.
In the following, we discuss two such subspace projection methods, the standard Lanczos method~\cite{Saa} as well as its combination with a rational Krylov subspace method for solving the projected equation.

\subsection{Lanczos method}

Given a symmetric matrix $A$ and a vector $\vec{c}$, the well-known Lanczos process (summarized in \cref{alg: lanc}) constructs an orthonormal basis $\b{Q}_M$ for the corresponding Krylov subspace $\pol_M(A, \vec{c})$. Additionally, it produces the tridiagonal matrix 
\[ \b{T}_M = \begin{bmatrix}
\alpha_1 & \beta_1 &  &  &  \\
\beta_1 & \alpha_2 & \beta_2 &  & \\
 & \beta_2 & \ddots & \ddots &  \\
 &  & \ddots & \alpha_{M-1} & \beta_{M-1} \\
 &  &  & \beta_{M-1} & \alpha_{M}
\end{bmatrix} \in \R^{M\times M} \]
such that 
\begin{equation} \label{eq:lanczosdecomp}
A\b{Q}_M = \b{Q}_M \b{T}_M + \beta_M \vec{q}_{M+1} \vec{e}_M^T,
\end{equation}
where $\beta_M \in \mathbb{R}$, and $\vec{q}_{M+1} \in \R^N$ is such that $[\b{Q}_M, \vec{q}_{M+1}]$ is an orthonormal basis for $\pol_{M+1}(A, \vec{c})$. Consequently, the coefficients of the projected equation~\cref{eq:projectedequation} are given by $\b{Q}_M^T A \b{Q}_M = \b{T}_M$ and 
$\b{Q}_M^T \vec{c} = \|\vec{c}\|_2 \vec{e}_1$, which matches~\cref{eqn: lyapproj}. We recall that the Lanczos method for the Lyapunov equation~\cref{eqn: lowlyapintro}
is simply \cref{alg: lanc}, followed by computing the solution $X_M$ of the projected equation and returning the approximation $\b{Q}_M X_M \b{Q}_M^T$ in factored form.

In the following, we will refer to one loop of 
\Cref{alg: lanc} in lines 5--9 as a \emph{Lanczos iteration}.
One such iteration produces the next basis vector $\vec{q}_{j+1}$ in a three-term recurrence that only involves the last two vectors 
$\vec{q}_{j-1}$ and $\vec{q}_{j}$.

Throughout the rest of this work, we will assume that $M < N$ and that no breakdown occurs, that is, 
$\pol_M(A, \vec{c})$ has dimension $M$. The presence of a breakdown is a rare and fortunate event, in which case the approximation $\b{Q}_M X_M \b{Q}_M^T$ equals the exact solution.

\begin{algorithm}
\begin{algorithmic}[1]
\caption{Lanczos Process}\label{alg: lanc}
\Require Symmetric $A \in \mathbb{R}^{N \times N}$, $\vec{c} \in \mathbb{R}^N$, number of Lanczos iterations $M$.
\Ensure $\b{Q}_{M+1} = [\vec{q}_1, \dots, \vec{q}_{M+1}]$, an orthonormal basis for $\mathcal{K}_{M+1}(A,\vec{c})$; diagonal entries $\{\alpha_1, \dots, \alpha_M\}$ and subdiagonal entries $\{\beta_1, \dots, \beta_{M}\}$ defining $\b{T}_{M}$.

\State $\beta_0 = 0;$
\State $ \vec q_0 = \vec 0;$
\State $\vec{q}_1 \leftarrow \vec{c}/\|\vec{c}\|_2$;
\For {$j = 1, \dots, M$}
    \tikzmark{lanczosStart}% Mark the beginning of the Lanczos iteration block.
    \State $\vec{w} \leftarrow A\vec{q}_{j} -\beta_{j-1}\vec{q}_{j-1}$;
    \State $\alpha_j \leftarrow \vec{q}_j^T\vec{w}$;
    \State $\vec{w} \leftarrow \vec{w} - \alpha_j\vec{q}_j$;
    \State $\beta_j \leftarrow \|\vec{w}\|_2$;
    \State $\vec{q}_{j+1} \leftarrow \vec{w}/\beta_j$;
    \tikzmark{lanczosEnd}% Mark the end of the Lanczos iteration block.
\EndFor
\end{algorithmic}
% Overlay TikZ drawing for the right brace:
\begin{tikzpicture}[overlay,remember picture]
  % Capture the vertical positions of the marks.
  \coordinate (start) at (pic cs:lanczosStart);
  \coordinate (end) at (pic cs:lanczosEnd);
  % Use a fixed horizontal offset (e.g., 3em to the right of the algorithm)
  \coordinate (braceStart) at ($(start)+(2.4em,-0.2cm)$);
  \coordinate (braceEnd) at ($(end)+(3.5em,-0.1cm)$);
  % Draw a right-hand brace. The path is drawn from the lower point to the higher point,
  % but using the 'mirror' option flips the brace so that it resembles a "}".
  \draw[decorate,decoration={brace, mirror, amplitude=5pt, raise=2pt}]
    (braceEnd) -- (braceStart)
    node[midway,right=6pt] {Lanczos iteration};
\end{tikzpicture}
\end{algorithm}

\subsection{Rational Krylov subspaces}
Given a general matrix $S \in \R^{M \times M}$, a rational Krylov subspace is constructed from repeatedly solving shifted linear systems with $S$;
see, e.g.,~\cite{Gut,  casThesis} for an introduction.

\begin{definition}[Rational Krylov subspace]  
For $S \in \mathbb{R}^{M \times M}$, consider a list of poles $\vec{\xi}_k = [\xi_1, \dots, \xi_{k}] \in {\mathbb{C}}^k$ that is closed under complex conjugation and does not contain any eigenvalue of $S$.
Given a block vector $B \in \mathbb{R}^{N \times \ell}$, the corresponding rational Krylov subspace is defined as  

\[
\rat(S, B, \vec{\xi}_k) := \operatorname{colspan} \Big\{ r(S) B,\Big|\,r(z) = \frac{p(z)}{q(z)}, \quad p \in \mathcal{P}_{k-1} \Big\},
\]
where  
\[
q(z) := (z-\xi_1)(z-\xi_2) \cdots (z-\xi_k),
\]
and $\mathcal{P}_{k-1}$ denotes all real polynomials of degree at most $k-1$.  
\end{definition}

In this work, we will only use rational Krylov subspaces with block size $\ell = 1$ or $\ell = 2$.
Moreover, to simplify the discussion, we will always assume that the dimension of $\rat(S, B,\vec{\xi}_k)$ is equal to $k \ell$. 

An orthonormal basis for the rational Krylov subspace $\rat(S, B, \vec{\xi}_k)$ can be computed using the (block) rational Arnoldi algorithm; see~\cite{Els} and~\cite[Algorithm~1]{casThesis}.

\subsection{Reference Method}

The reference method, that will serve as the basis of our subsequent developments, is a modification of the Lanczos method.
Instead of solving the projected
equation~\cref{eqn: lyapproj} exactly, it uses 
subspace projection with an orthonormal basis $\b U_{M,k}$ for the rational Krylov subspace $\rat(\b{T}_M, \vec{e}_1, \vec{\xi}_k)$ to approximate the solution $X_M$.
The corresponding pseudocode is outlined in \Cref{alg: naive}; suitable choices for the poles $\vec{\xi}_k$ will be discussed in \Cref{sub:zolotarev}. 

\begin{algorithm}
\caption{Reference Method}\label{alg: naive}
    \begin{algorithmic}[1]
    \Require Symmetric positive definite $A \in \mathbb{R}^{N \times N}$, $\vec{c} \in \mathbb{R}^N$, number of Lanczos iterations $M$, and list of $k$ poles $\vec{\xi}_k$ closed under complex conjugation.
    \Ensure Approximation $X_{\mathtt{ref}}$  in factored form to the solution of the Lyapunov equation~\cref{eqn: lowlyapintro}.
    \State Apply Lanczos process (\cref{alg: lanc}) to compute orthonormal basis $\b{Q}_M$ and tridiagonal matrix $\b{T}_M$;
    \State Compute orthonormal basis $\b{U}_{M,k}$ for rational Krylov subspace $\rat(\b{T}_M, \vec{e}_1, \vec{\xi}_k)$;
    \State Solve projected equation $\b{U}_{M,k}^T \b{T}_M \b{U}_{M,k} Y_{M,k} + Y_{M,k} \b{U}_{M,k}^T \b{T}_M \b{U}_{M,k} ={\|\vec{c}\|_2^2} (\b{U}_{M,k}^T \vec{e}_1) (\b{U}_{M,k}^T \vec{e}_1)^T$; 
    \State \Return $X_{\mathtt{ref}} = (\b{Q}_M \b{U}_{M,k}) Y_{M,k} (\b{Q}_M \b{U}_{M,k})^T$.
    \end{algorithmic}
\end{algorithm}

It is easy to see that Algorithm~\ref{alg: naive} is again a subspace projection method with the orthonormal basis $\b{Q}_M \b{U}_{M,k} \in \R^{N\times k}$. Compared to the Lanczos method, this method reduces the computational cost for
solving the projected equation and returns an equally accurate approximation of much lower rank, provided that the poles are well chosen. However, it does \emph{not} address 
the memory issues related to storing the $N\times M$ matrix $\b{Q}_M$, because the entire matrix is needed to compute the matrix $\b{Q}_M \b{U}_{M,k}$, which usually has much fewer columns. In \Cref{sec: algo}, we will circumvent this drawback by modifying
Algorithm~\ref{alg: naive} such that $\b{Q}_M \b{U}_{M,k}$ is computed implicitly.
 
\subsection{Error Bounds}
\label{sub:error}

In this section, we provide error bounds for \cref{alg: naive} that quantify the impact of approximating the projected equation~\cref{eqn: lyapproj} within the Lanczos method by a rational Krylov subspace method. First, we state a known result on the error for the projected equation itself.
\begin{lemma}\label{lem:dru}
Consider Algorithm~\ref{alg: naive} applied to a 
symmetric positive definite matrix $A\in\R^{N\times N}$, with 
the smallest and largest eigenvalues of $A$ denoted by $\lambda_{\min}$ and $\lambda_{\max}$, respectively.
Suppose that
none of the poles $\xi_i$ is in $[\lambda_{\min},\lambda_{\max}]$, and define
\begin{equation} \label{eq:raterr}
\raterr(\vec{\xi}_k, \lambda_{\min},\lambda_{\max}):=
 \max_{z\in[\lambda_{\min},\lambda_{\max}]}\frac{\prod_{i=1}^k |z+\bar{\xi}_i|^2}{\prod_{i=1}^k |z-\xi_i|^2}.
\end{equation}
Then the error between the solution $X_M$
of the projected equation~\cref{eqn: lyapproj} and its approximation $\b{U}_{M,k}Y_{M,k}\b{U}_{M,k}^T$ satisfies
\[
\|X_M - \b{U}_{M,k}Y_{M,k}\b{U}_{M,k}^T\|_F \le \frac{\raterr(\vec{\xi}_k, \lambda_{\min},\lambda_{\max})}{\lambda_{\min}} \,  \, \|\vec{c}\|_2^2.
\]
\end{lemma}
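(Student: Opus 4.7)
The plan is to relate the Galerkin error to a rational approximation of the identity on the spectrum of $\b T_M$, and then to bound the projected solution $X_M$ in terms of the spectral extremes of $\b T_M$. Define the rational function
\[
r(z) := \prod_{i=1}^k \frac{z+\bar{\xi}_i}{z-\xi_i},
\]
so that $\raterr(\vec{\xi}_k,\lambda_{\min},\lambda_{\max}) = \max_{z\in[\lambda_{\min},\lambda_{\max}]}|r(z)|^2$; closure of $\vec{\xi}_k$ under complex conjugation makes $r$ a real rational function whose set of poles coincides with $\{\xi_i\}$. The structural goal is to establish the factored inequality
\[
\|X_M-\b U_{M,k}Y_{M,k}\b U_{M,k}^T\|_F \,\le\, \|r(\b T_M)\|_2^2\, \|X_M\|_F,
\]
after which the statement follows by bounding each factor separately.

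For the operator factor, since $\b T_M = \b Q_M^T A \b Q_M$ inherits symmetric positive definiteness from $A$ with $\sigma(\b T_M)\subseteq[\lambda_{\min},\lambda_{\max}]$, the spectral theorem yields $\|r(\b T_M)\|_2 \le \max_{z\in[\lambda_{\min},\lambda_{\max}]}|r(z)| = \sqrt{\raterr(\vec{\xi}_k,\lambda_{\min},\lambda_{\max})}$. To obtain the factored inequality itself, I would leverage the standard correspondence between the rational Krylov subspace $\rat(\b T_M,\vec{e}_1,\vec{\xi}_k)$ and the ADI iteration for~\cref{eqn: lyapproj} with shifts $\{-\xi_i\}$: the $k$-th ADI iterate $X_k^{\mathrm{ADI}}$ started from zero lies in this subspace and satisfies the closed-form error identity $X_M - X_k^{\mathrm{ADI}} = r(\b T_M)\,X_M\, r(\b T_M)^T$, which yields the desired inequality for $X_k^{\mathrm{ADI}}$; a quasi-optimality argument from the rational Krylov literature (as in Druskin--Simoncini's convergence analysis) then transfers this bound to the Galerkin approximation $\b U_{M,k}Y_{M,k}\b U_{M,k}^T$.

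For the solution factor, I take the trace of~\cref{eqn: lyapproj} to obtain $2\,\operatorname{tr}(\b T_M X_M) = \|\vec{c}\|_2^2$; combining the cyclic identity $\operatorname{tr}(\b T_M X_M) = \operatorname{tr}(\b T_M^{1/2} X_M \b T_M^{1/2}) \ge \lambda_{\min}\,\operatorname{tr}(X_M)$ with the psd inequality $\|X_M\|_F \le \operatorname{tr}(X_M)$ yields $\|X_M\|_F \le \|\vec{c}\|_2^2/(2\lambda_{\min})$. Multiplying the two factor bounds gives the claim (in fact with a spare factor~$\tfrac{1}{2}$). I expect the main obstacle to be the Galerkin--ADI transfer step: the Galerkin projection does not minimize Frobenius error in general, so upgrading the explicit ADI error identity into a bound of the same form for the Galerkin iterate is the delicate point and is the one place where the symmetric positive-definite structure of the projected equation is truly used.
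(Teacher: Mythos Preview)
Your proposal is essentially correct and aligned with the paper's approach: the paper's proof is a one-line citation of Theorem~4.2 in~\cite{Dru2} together with the observation that $\sigma(\b T_M)\subseteq[\lambda_{\min},\lambda_{\max}]$, and your sketch simply unpacks what that cited theorem contains (the ADI error identity $X_M-X_k^{\mathrm{ADI}}=r(\b T_M)X_M r(\b T_M)^T$ plus the transfer to the Galerkin iterate), deferring to the same Druskin--Simoncini analysis at precisely the step you flag as delicate. Your trace argument for $\|X_M\|_F\le\|\vec c\|_2^2/(2\lambda_{\min})$ is correct and in fact yields a constant a factor of~$2$ sharper than the stated lemma.
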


\begin{proof}
This result is a direct consequence of Theorem 4.2 from~\cite{Dru2}, taking into account that the spectrum of $\b{T}_M = \b{Q}_M^T A \b{Q}_M$ is contained in the interval $[\lambda_{\min}, \lambda_{\max}]$.
\end{proof}

We now relate the approximation error of the reference method with the Lanczos method.

\begin{corollary}\label{thm:solbound}
Consider the setting of \Cref{lem:dru}. Then the approximation $X_{\mathtt{ref}}$ returned by Algorithm~\ref{alg: naive} satisfies the error bound
\begin{equation}\label{eqn:solbound}
\|X - X_{\mathtt{ref}}\|_F \le \|X - X_{\mathtt{lan}}\|_F + \frac{\raterr(\vec{\xi}_k, \lambda_{\min},\lambda_{\max})}{\lambda_{\min}}  \|\vec{c}\|_2^2,
\end{equation}
where $X_{\mathtt{lan}} = \b{Q}_M X_M \b{Q}_M^T$ denotes the approximation returned by the Lanczos method.
\end{corollary}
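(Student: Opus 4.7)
The plan is to obtain the bound via a straightforward triangle-inequality decomposition that separates the two sources of error: the error incurred by the Lanczos projection and the error incurred by replacing the exact solution of the projected equation~\cref{eqn: lyapproj} with its rational Krylov approximation.

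First, I would split
\begin{equation*}
\|X - X_{\mathtt{ref}}\|_F \le \|X - X_{\mathtt{lan}}\|_F + \|X_{\mathtt{lan}} - X_{\mathtt{ref}}\|_F,
\end{equation*}
so that the task reduces to controlling the second term on the right-hand side.

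Next, I would use the explicit form of the two approximations. By definition, $X_{\mathtt{lan}} = \b{Q}_M X_M \b{Q}_M^T$ and $X_{\mathtt{ref}} = \b{Q}_M \b{U}_{M,k} Y_{M,k} \b{U}_{M,k}^T \b{Q}_M^T$, so factoring out $\b{Q}_M$ on the left and $\b{Q}_M^T$ on the right gives
\begin{equation*}
X_{\mathtt{lan}} - X_{\mathtt{ref}} = \b{Q}_M \bigl( X_M - \b{U}_{M,k} Y_{M,k} \b{U}_{M,k}^T \bigr) \b{Q}_M^T.
\end{equation*}
Since $\b{Q}_M$ has orthonormal columns, pre- and post-multiplication by $\b{Q}_M$ and $\b{Q}_M^T$ is unitarily invariant in Frobenius norm, so
\begin{equation*}
\|X_{\mathtt{lan}} - X_{\mathtt{ref}}\|_F = \|X_M - \b{U}_{M,k} Y_{M,k} \b{U}_{M,k}^T\|_F.
\end{equation*}

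Finally, I would invoke \Cref{lem:dru} directly on the right-hand side to bound this by $\raterr(\vec{\xi}_k, \lambda_{\min},\lambda_{\max}) \, \|\vec{c}\|_2^2 / \lambda_{\min}$, and combine with the triangle inequality above to conclude. There is no real obstacle here; the only point that deserves a brief mention is the unitary invariance of the Frobenius norm under multiplication by $\b{Q}_M$, which is the key identity that lets us transfer the projected-equation error bound of \Cref{lem:dru} to the ambient space.
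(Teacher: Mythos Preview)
Your proof is correct and follows essentially the same route as the paper: triangle inequality, the factorization $X_{\mathtt{lan}} - X_{\mathtt{ref}} = \b{Q}_M (X_M - \b{U}_{M,k} Y_{M,k} \b{U}_{M,k}^T) \b{Q}_M^T$, and then \Cref{lem:dru}. The only cosmetic difference is that you make the unitary invariance of the Frobenius norm under multiplication by $\b{Q}_M$ explicit (yielding an equality), whereas the paper leaves this step implicit.
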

\begin{proof}
By the triangle inequality,
\[
\|X - X_{\mathtt{ref}}\|_F \le \|X - X_{\mathtt{lan}}\|_F + \|X_{\mathtt{lan}} - X_{\mathtt{ref}}\|_F.
\]
Noting that
$
X_{\mathtt{lan}} - X_{\mathtt{ref}} = \b{Q}_M (X_M - \b{U}_{M,k} Y_{M,k} \b{U}_{M,k}^T) \b{Q}_M^T
$
and applying \Cref{lem:dru} concludes the proof:
\begin{equation}  \label{eq:lanref}
 \|X_{\mathtt{lan}} - X_{\mathtt{ref}}\|_F \le \raterr(\vec{\xi}_k, \lambda_{\min},\lambda_{\max}) / \lambda_{\min}\cdot \|\vec{c}\|_2^2.
\end{equation}
\end{proof}
Similar to~\Cref{thm:solbound}, one obtains the following bound that relates the residual norm
of the reference method with Lanczos method:
\begin{equation}\label{eqn:resbound}
\begin{aligned}
\|AX_{\mathtt{ref}} + X_{\mathtt{ref}}A - \vec{c}\vec{c}^T\|_F 
&\le \|AX_{\mathtt{lan}} + X_{\mathtt{lan}}A - \vec{c}\vec{c}^T\|_F \\
&\quad + 2\,\frac{\lambda_{\max}}{\lambda_{\min}} \raterr(\vec{\xi}_k, \lambda_{\min},\lambda_{\max})
\|\vec{c}\|_2^2.
\end{aligned}
\end{equation}

\subsection{Pole selection}
\label{sub:zolotarev}

The bounds from \Cref{sub:error} suggest to choose the poles such that the expression
$\raterr(\vec{\xi}_k, \lambda_{\min},\lambda_{\max})$ defined in~\cref{eq:raterr} 
is minimized. This problem has been extensively studied in the literature on the ADI method~\cite{Ell}, and explicit formulas for the optimal poles — commonly referred to as Zolotarev poles — can be obtained from solving the third Zolotarev problem on real symmetric intervals. In particular, according to \cite[Thm 3.3]{Bec3}, selecting $\vec \xi_k$ as the Zolotarev poles ensures that the quantity $\raterr(\vec{\xi}_k, \lambda_{\min},\lambda_{\max})$ is given by  
\begin{equation}\label{eq:zolnum}
Z_k([-\lambda_{\max},-\lambda_{\min}], [\lambda_{\min},\lambda_{\max}]) := \min_{r \in \mathcal{R}_{k,k}} \frac{\sup_{z \in [\lambda_{\min},\lambda_{\max}]} |r(z)|}{\inf_{z \in [-\lambda_{\max},-\lambda_{\min}]} |r(z)|},
\end{equation}  
where $\mathcal{R}_{k,k}$ denotes the set of rational functions of the form \( p/q \), with \( p, q \in \mathcal P_{k} \).

The quantity \cref{eq:zolnum} is known as the Zolotarev number and decays exponentially to zero as \( k \) increases~\cite{Bec3}. Specifically, we have  the bound   
\begin{equation}\label{eqn:zolbound}
Z_k([-\lambda_{\max},-\lambda_{\min}], [\lambda_{\min},\lambda_{\max}]) \leq 4\left[\exp\left(\frac{\pi^2}{2\log(4\lambda_{\max}/\lambda_{\min})}\right)\right]^{-2k}.
\end{equation}
Thus, the error bound~\cref{eqn:solbound} of~\Cref{thm:solbound} implies
\[
\|X - X_{\mathtt{ref}}\|_F \le \|X - X_{\mathtt{lan}}\|_F + \frac{4}{\lambda_{\min}}\left[\exp\left(\frac{\pi^2}{2\log(4\lambda_{\max}/\lambda_{\min})}\right)\right]^{-2k} \|\vec{c}\|_2^2,
\]  
and an analogous implication holds for the residual bound~\cref{eqn:resbound}.
Given any $\epsilon > 0$, we can thus determine a suitable integer $k$ such that the approximation $X_{\mathtt{ref}}$ returned by the reference method with $k$ Zolotarev poles differs from the approximation $X_{\mathtt{lan}}$ returned by Lanczos method by at most $\epsilon$, in terms of the error and/or residual norms.
Importantly, $k$ grows only logarithmically with respect to $\epsilon^{-1}$ and $\lambda_{\max}/\lambda_{\min}$. Moreover, the
pole selection strategy is independent of the number of iterations $M$ of the Lanczos process.  

\subsection{Computation of the residual}

In practice, the total number of Lanczos iterations $M$ required by~\cref{alg: naive} to achieve a certain accuracy is not known in advance. Following common practice, we use the norm of the 
residual
$AX_{\mathtt{ref}} + X_{\mathtt{ref}} A - \vec{c} \vec{c}^T$
to decide whether to stop~\cref{alg: naive} or continue the Lanczos process. The following result yields a cheap (and tight) bound for estimating this residual norm.

\begin{lemma}\label{lemma:resdecomp}
Consider the setting of \cref{lem:dru}, and let $\b U_{M,k}$ and $Y_{M,k}$ be the matrices produced in line 3 of~\cref{alg: naive}. Then the approximation returned by \cref{alg: naive} satisfies
\begin{align}\label{eqn: err}
\begin{split}
 &\|AX_{\mathtt{ref}} + X_{\mathtt{ref}} A - \vec{c} \vec{c}^T\|_F^2 \\
 & \le  2\beta_M^2\left\lVert \vec{e}_{M}^T\b U_{M,k}Y_{M,k} \right\rVert_2^2 + 2 \left(\,\frac{\lambda_{\max}}{\lambda_{\min}}
\raterr(\vec{\xi}_k, \lambda_{\min},\lambda_{\max})\|\vec{c}\|_2^2\right)^2.
\end{split}
\end{align}
\end{lemma}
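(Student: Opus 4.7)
The plan is to expand the residual directly, using the Lanczos relation~\cref{eq:lanczosdecomp} to isolate the two parts that live in $\operatorname{range}(\b Q_M)\otimes\operatorname{range}(\b Q_M)$ and in the complementary direction spanned by $\vec q_{M+1}$, respectively, and then to exploit their Frobenius orthogonality. Writing $X_{\mathtt{ref}} = \b Q_M \tilde X_M \b Q_M^T$ with $\tilde X_M := \b U_{M,k} Y_{M,k} \b U_{M,k}^T$, substitution of $A\b Q_M = \b Q_M \b T_M + \beta_M \vec q_{M+1} \vec e_M^T$ and of $\vec c = \|\vec c\|_2\, \b Q_M \vec e_1$ yields
\begin{equation*}
A X_{\mathtt{ref}} + X_{\mathtt{ref}} A - \vec c\vec c^T = \b Q_M R \b Q_M^T + \beta_M \bigl( \vec q_{M+1} \vec v^T + \vec v \vec q_{M+1}^T \bigr),
\end{equation*}
where $R := \b T_M \tilde X_M + \tilde X_M \b T_M - \|\vec c\|_2^2 \vec e_1 \vec e_1^T$ and $\vec v := \b Q_M \tilde X_M \vec e_M$. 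Since $\vec q_{M+1}^T \b Q_M = 0$, the two summands are Frobenius-orthogonal, and Pythagoras gives $\|A X_{\mathtt{ref}} + X_{\mathtt{ref}} A - \vec c\vec c^T\|_F^2 = \|R\|_F^2 + 2\beta_M^2 \|\vec v\|_2^2$.

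For the off-diagonal piece, orthonormality of $\b Q_M$ together with symmetry of $Y_{M,k}$ gives $\|\vec v\|_2 = \|\tilde X_M \vec e_M\|_2 = \|Y_{M,k} \b U_{M,k}^T \vec e_M\|_2 = \|\vec e_M^T \b U_{M,k} Y_{M,k}\|_2$, reproducing the first summand of~\cref{eqn: err}. For the diagonal piece, $X_M$ exactly solves~\cref{eqn: lyapproj}, so subtracting that identity from the definition of $R$ yields $R = \b T_M E + E \b T_M$ with $E := \tilde X_M - X_M$. Combining $\|\b T_M\|_2 \le \lambda_{\max}$ with Lemma~\ref{lem:dru}'s bound $\|E\|_F \le \raterr(\vec\xi_k,\lambda_{\min},\lambda_{\max}) \|\vec c\|_2^2 / \lambda_{\min}$ then controls $\|R\|_F$ by an absolute constant times $(\lambda_{\max}/\lambda_{\min})\,\raterr(\vec\xi_k,\lambda_{\min},\lambda_{\max})\,\|\vec c\|_2^2$, producing the second summand of~\cref{eqn: err}.

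The step most likely to require care is pinning down the exact numerical constant in the $\raterr$-term: the crude triangle-inequality bound $\|\b T_M E + E \b T_M\|_F \le 2\lambda_{\max}\|E\|_F$ leads to a factor $4$ after squaring, whereas~\cref{eqn: err} displays $2$. I expect the tighter constant to fall out either from diagonalizing $\b T_M$ and estimating $(\lambda_i+\lambda_j)^2$ against the structure of $E$, or from invoking the Galerkin identity $\b U_{M,k}^T R \b U_{M,k} = 0$, which forces $R$ to vanish on a $k$-dimensional subspace and thus should absorb the missing factor.
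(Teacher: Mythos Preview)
Your approach is essentially identical to the paper's: both expand the residual, split it into the $\b Q_M$-block and the cross terms involving $\vec q_{M+1}$, use Frobenius orthogonality (the paper phrases this as projecting with the orthonormal basis $\b Q_{M+1}$), and then bound the two pieces exactly as you do --- the cross term via $\vec q_{M+1}^T A\b Q_M=\beta_M\vec e_M^T$, and the inner block via $R=\b T_M E+E\b T_M$ together with Lemma~\ref{lem:dru}.

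Regarding the constant: your concern is well founded, but neither of your two suggested refinements will recover the factor $2$. Diagonalizing $\b T_M$ gives $\|\b T_M E+E\b T_M\|_F^2=\sum_{i,j}(\lambda_i+\lambda_j)^2\tilde E_{ij}^2$, which is bounded by $4\lambda_{\max}^2\|E\|_F^2$ and this is sharp (take $E$ supported on the top eigenvector). The Galerkin identity $\b U_{M,k}^T R\,\b U_{M,k}=0$ does hold, but it only says $R$ vanishes on a $k$-dimensional subspace and does not by itself cut the bound in half. The paper's own proof simply writes $\|\b Q_M^T(AD+DA)\b Q_M\|_F^2\le 2\lambda_{\max}^2\|D\|_F^2$ without further comment; this appears to be a minor slip, and the honest constant from the argument is $4$ rather than $2$. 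Since the lemma is used only to derive a computable stopping criterion, the discrepancy is immaterial, but you should not spend effort trying to squeeze out the extra factor.
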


\begin{proof}
We first note that range and co-range of the residual $\mathsf{res} := AX_{\mathtt{ref}} + X_{\mathtt{ref}} A - \vec{c} \vec{c}^T$ are contained in $\mathcal{K}_{M+1}(A, \vec{c})$, with the orthonormal basis $\b Q_{M+1} = [\b Q_M, \vec{q}_{M+1}]$.
This allows us to decompose
\begin{align}
 \|\mathsf{res}\|_F^2 &= \|\b Q_M^T\mathsf{res}\,\b Q_M\|_F^2 + 2 \|\vec{q}_{M+1}^T\mathsf{res}\,\b Q_M\|_2^2 + 
 |\vec{q}_{M+1}^T\mathsf{res}\,\vec{q}_{M+1}|^2 \nonumber \\
 &= \|\b Q_M^T ( A( X_{\mathtt{ref}}- X_{\mathtt{lan}} ) + ( X_{\mathtt{ref}}- X_{\mathtt{lan}} ) A )  \b Q_M\|_F^2 + 2 \|\vec{q}_{M+1}^T\mathsf{res}\,\b Q_M\|_2^2 \nonumber  \\
 &\le 2 \lambda_{\max}^2  \|X_{\mathtt{ref}}- X_{\mathtt{lan}} \|_F^2
 + 2 \|\vec{q}_{M+1}^T A \b Q_M \b U_{M,k}Y_{M,k} \|_2^2 \label{eq:aux2}
\end{align}
where the second equality follows from
$\b Q_M^T( AX_{\mathtt{lan}} + X_{\mathtt{lan}} A - \vec{c} \vec{c}^T ) \b Q_M = 0$
and $|\vec{q}_{M+1}^T\mathsf{res}\,\vec{q}_{M+1}| = 0$ (implied by $X_{\mathtt{ref}} \vec{q}_{M+1} = 0$, $\vec{q}_{M+1}^T X_{\mathtt{ref}} = 0$).

To bound the first term in~\cref{eq:aux2}, we use the bound~\cref{eq:lanref} for $\|X_{\mathtt{ref}}- X_{\mathtt{lan}} \|_F$, which leads to the second term in~\cref{eqn: err}. To bound the second term in~\cref{eq:aux2}, we note that the Lanczos decomposition~\cref{eq:lanczosdecomp} implies 
$\vec{q}_{M+1}^T A \b Q_M = \beta_M \vec{e}_{M}^T$, 
which leads to the first term in~\cref{eqn: err}.
\end{proof}

To ensure that Algorithm~\ref{alg: naive} produces a residual norm below a prescribed 
relative tolerance $\texttt{tol}\cdot \|\vec{c}\|_2^2$, the result of \Cref{lemma:resdecomp} suggests to first choose 
$k$ and Zolotarev poles $\vec{\xi}_k$ such that the error bound~\cref{eqn:zolbound} multiplied 
by $\lambda_{\max} / \lambda_{\min}$ remains below $\texttt{tol} / {2}$. Then the Lanczos process 
is carried out until
$\beta_M \left\lVert \vec{e}_{M}^T\b U_{M,k}Y_{M,k} \right\rVert_2 \le \texttt{tol} \cdot \|\vec{c}\|_2^2 / {2}$
is satisfied.
\section{Main algorithm}\label{sec: algo}

The goal of this section is to modify the reference method (\cref{alg: naive}) such that it avoids storing the entire basis $\b{Q}_M$ produced by the Lanczos process. In~\Cref{sec:notation}, we first introduce the necessary notation and provide an intuitive derivation of the algorithm, while theoretical results are presented in \Cref{sec:theoretical} and implementation aspects are discussed in \Cref{sub: praimp}.

In~\cite{Cas}, a compression strategy for the Lanczos method applied to a matrix function $f(A)\vec{c}$ has been presented, which
successively updates an approximation to $f(A)\vec{c}$ every fixed number of Lanczos iterations.
While clearly inspired by~\cite{Cas}, our compression strategy for Lyapunov equations is different. Successive updates of the approximate solution
would significantly increase its rank. Although this increase could be mitigated by repeated low-rank approximation, such a measure might be costly and difficult to justify theoretically. Therefore, instead of updating the approximate solution, we 
employ an update strategy based on the rational Krylov subspace itself. This comes with the additional advantage that,
unlike~\cite[Prop. 4.2]{Cas}, the additional error incurred by compression does not 
depend on the number of cycles.

\subsection{Notation and derivation of algorithm} 
\subsubsection{Partitioning Lanczos basis and tridiagonal matrix into cycles}\label{sec:notation} 
Following~\cite{Cas}, the Lanczos iterations are divided into $s$ cycles as follows:
\begin{itemize}
 \item the first cycle consists of $m + 2k$ iterations, where $m$ is fixed and $k$ is the number of Zolotarev poles;
 \item while each of the remaining $s - 1$ cycles consists of $m$ iterations. 
\end{itemize}
Consequently, the total number of Lanczos iterations performed is $M = sm + 2k$. As we will see below, at most $m + 2k +1$ vectors of length $N$ need to be stored in memory throughout the algorithm.

We use $i = 1,\ldots,s$ to index the cycle. The total number of Lanczos iterations performed until cycle $i$ is given by $im+2k$. Until cycle $i$ the Lanczos process generates $im+2k+1$ basis vectors (which are \emph{not} fully stored) denoted by $[Q_i, \vec q_{im+2k+1}]$, where $Q_i \in \mathbb{R}^{N \times (im+2k)}$ contains the first $im+2k$ columns of $\mathbf{Q}_M$. We let $\widehat{Q}_{i+1}\in \R^{N\times m}$ denote the matrix generated during cycle $i+1$, so that \[
Q_{i+1} = [Q_i, \widehat{Q}_{i+1}], \quad i = 1,\ldots,s-1;
\]
see \Cref{fig: Qi} for an illustration. Note that, because of the three-term recurrence relation, only the last column of $Q_i$ and the vector $\vec q_{im+2k+1}$ are needed to compute $\widehat{Q}_{i+1}$.

\begin{figure}
\begin{center}
\begin{tikzpicture}[scale = 0.7]\label{fig: Qi}

    \node at (-1.1, 2.5) {$\b{Q}_M = $};

    \draw[white] (3.5,0) rectangle (5,5) node[pos=.5] {\textcolor{black}{$\dots$}};
    \fill[red!20] (0,0) rectangle (3.5,5);
    \draw[dotted] (0,0) rectangle (2,5);
    \draw[red!20] (2,0) rectangle (3.5,5) node[pos=.51] {\textcolor{black}{$\widehat{Q}_{i+1}$}};
    \draw[dotted] (0,0) rectangle (2,5) node[pos=.5] {\textcolor{black}{$Q_i$}};
    \draw[white](2,  5+0.2) -- node[above]{\textcolor{black}{$m$}}(3.5 , 5 + 0.3);
    \draw [decorate,decoration={brace,amplitude=2mm, raise=.5mm}] (2, 5) -- ( 3.5, 5);
    \draw[very thick] (0,0) rectangle (5,5);
    \draw[] (0,0) rectangle (3.5,5);
    \draw[white](0,  5+0.2) -- node[above]{\textcolor{black}{$im+2k$}}(2 , 5 + 0.3);
    \draw [decorate,decoration={brace,amplitude=2mm, raise=.5mm}] (0, 5) -- ( 2, 5);

    \node at (6, 2.5 ) {$ = $};

    \draw[white] (10.5,0) rectangle (12,5) node[pos=.5] {\textcolor{black}{$\dots$}};
    \draw[very thick] (7,0) rectangle (12,5);
    \fill[red!40] (7,0) rectangle (10.5,5);
    \draw[] (7,0) rectangle (10.5,5) node[pos=.5] {$Q_{i+1}$};
    \draw[white](7,  5+0.2) -- node[above]{\textcolor{black}{$(i+1)m+2k$}}(10.5 , 5 + 0.3);
    \draw [decorate,decoration={brace,amplitude=2mm, raise=.5mm}] (7, 5) -- ( 10.5, 5);

\end{tikzpicture}
\end{center}
\caption{Graphical representation of the orthonormal basis $Q_{i+1}$ computed until cycle $i+1$.}
\end{figure}
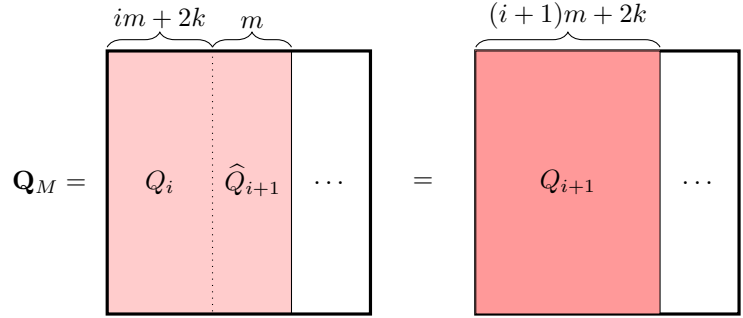

The tridiagonal matrix obtained from the Lanczos process until cycle $i$ is denoted by $T_i := Q_i^T A Q_i$. Additionally, the $m \times m$ tridiagonal matrix generated during cycle $i+1$ is denoted by $\widehat{T}_{i+1}:=\widehat{Q}_{i+1}^T A \widehat{Q}_{i+1}$.  
Note that  
\begin{equation}\label{eqn:Ti}  
T_{i+1} = \begin{bmatrix} 
T_i & \beta_{im+2k} \vec{e}_{im+2k} \vec{e}_1^T \\  
\beta_{im+2k} \vec{e}_1 \vec{e}_{im+2k}^T & \widehat{T}_{i+1}  
\end{bmatrix},  
\end{equation}  
and both $T_i$ and $\widehat{T}_{i+1}$ are principal submatrices of the full tridiagonal matrix $\b{T}_M$; see \Cref{fig: Ti}.  

\begin{figure}
\begin{center}
\begin{tikzpicture}[scale = 0.5]\label{fig: Ti}
\def\scale{0.19};
\def\band{0.15};
\def\n{4};

\node at (-2*\n-0.5 , 0) {$\b{T}_M = $};

\draw[white] (2*\n*4*\scale - \n, -2*\n*4*\scale + \n) rectangle (\n,-\n) node[pos=.4] {\textcolor{black}{$\ddots$}};
 
\foreach \i in {3} {
  \pgfmathsetmacro\k{20}
  \draw[fill = red!\k] (\n*\i*\scale + \n*\i*\scale - \n, \n*\i*\scale + \n - \n*\i*\scale) rectangle (-\n*\i*\scale + \n*\i*\scale - \n, -\n*\i*\scale + \n - \n*\i*\scale) node[pos=.5] {$T_i$};
}

\foreach \i in {3} {
  \pgfmathsetmacro\k{20}

  \draw[very thick] (\n*\i*\scale + \n*\i*\scale - \n, \n*\i*\scale + \n - \n*\i*\scale)--(-\n*\i*\scale + \n*\i*\scale - \n, \n*\i*\scale + \n - \n*\i*\scale)--(-\n*\i*\scale + \n*\i*\scale - \n, -\n*\i*\scale + \n - \n*\i*\scale)--(\n*\i*\scale + \n*\i*\scale - \n, -\n*\i*\scale + \n - \n*\i*\scale)--cycle;
}

\node at (+\n*3*\scale + \n*3*\scale - \n +\scale, -\n*3*\scale + \n - \n*3*\scale + \scale) [circle,fill,inner sep=1pt]{};

\node at (+\n*3*\scale + \n*3*\scale - \n -\scale, -\n*3*\scale + \n - \n*3*\scale - \scale) [circle,fill,inner sep=1pt]{};

\draw[fill = red!20] (2*\n*3*\scale - \n, -2*\n*3*\scale + \n) rectangle (2*\n*4*\scale - \n, -2*\n*4*\scale + \n) node[pos = .5] {$\widehat{T}_{i+1}$};
\draw[very thick] (2*\n*3*\scale - \n, -2*\n*3*\scale + \n)--(2*\n*4*\scale - \n, -2*\n*3*\scale + \n)--(2*\n*4*\scale - \n, -2*\n*4*\scale + \n)--(2*\n*3*\scale - \n, -2*\n*4*\scale + \n)--cycle;

\draw[very thick] (\n,\n)--(-\n,\n)--(-\n,-\n)--(\n,-\n)--cycle;

\draw[white](- \n - 0.3, -\n*4*\scale + \n - \n*4*\scale) -- node[left]{\textcolor{black}{$m$}}(- \n - 0.3, -\n*3*\scale + \n - \n*3*\scale);

\draw [decorate,decoration={brace,amplitude=2mm, raise=.5mm}] (- \n, -\n*4*\scale + \n - \n*4*\scale) --(- \n, -\n*3*\scale + \n - \n*3*\scale);

\draw[white](\n*4*\scale - \n + \n*4*\scale,  \n + 0.4) -- node[above]{\textcolor{black}{$m$}}(\n*3*\scale - \n + \n*3*\scale, \n + 0.4);

\draw [decorate,decoration={brace,amplitude=2mm, mirror, raise=.5mm}] (\n*4*\scale - \n + \n*4*\scale, \n) -- (\n*3*\scale - \n + \n*3*\scale, \n);

\draw[white](- \n - 0.4, -\n*3*\scale + \n - \n*3*\scale) -- node[left]{\textcolor{black}{$im+2k$}}(- \n - 0.4,  \n);

\draw [decorate,decoration={brace,amplitude=2mm, raise=.5mm}] (- \n, -\n*3*\scale + \n - \n*3*\scale) --(- \n,  + \n );

\draw[white](\n*3*\scale - \n + \n*3*\scale,  \n + 0.3) -- node[above]{\textcolor{black}{$im+2k$}}(- \n , \n + 0.3);

\draw [decorate,decoration={brace,amplitude=2mm, mirror, raise=.5mm}] (\n*3*\scale - \n + \n*3*\scale, \n) -- ( -\n, \n);

\draw[white] (2*\n*4*\scale +\n +2, -2*\n*4*\scale + \n) rectangle (\n+\n+\n+2,-\n) node[pos=.4] {\textcolor{black}{$\ddots$}};

\draw[very thick] (3*\n + 2,\n)--(\n+2,\n)--(\n+2,-\n)--(3*\n+2,-\n)--cycle;

\node at (\n + 1, 0) {$ = $};

\foreach \i in {4} {
  \pgfmathsetmacro\k{40}
  \draw[fill = red!\k] (\n*\i*\scale + \n*\i*\scale +2+\n, \n*\i*\scale + \n - \n*\i*\scale) rectangle (-\n*\i*\scale + \n*\i*\scale +2+\n, -\n*\i*\scale + \n - \n*\i*\scale) node[pos=.5] {$T_{i+1}$};
}

\foreach \i in {4} {

  \draw[very thick] (\n*\i*\scale + \n*\i*\scale + \n +2, \n*\i*\scale + \n - \n*\i*\scale)--(-\n*\i*\scale + \n*\i*\scale + \n +2, \n*\i*\scale + \n - \n*\i*\scale)--(-\n*\i*\scale + \n*\i*\scale + \n +2, -\n*\i*\scale + \n - \n*\i*\scale)--(\n*\i*\scale + \n*\i*\scale + \n +2, -\n*\i*\scale + \n - \n*\i*\scale)--cycle;
  
}

\draw[white](\n*4*\scale \n + 2 +\n*4*\scale,  \n + 0.3) -- node[above]{\textcolor{black}{$(i+1)m+2k$}}(2*\n +2 , \n + 0.3);

\draw [decorate,decoration={brace,amplitude=2mm, mirror, raise=.5mm}] (\n*4*\scale + \n + 2+\n*4*\scale, \n) -- ( \n +2 , \n);

\end{tikzpicture}
\end{center}
\caption{Graphical representation of the tridiagonal matrices $T_{i+1}$ generated until cycle $i+1$.}
\end{figure}
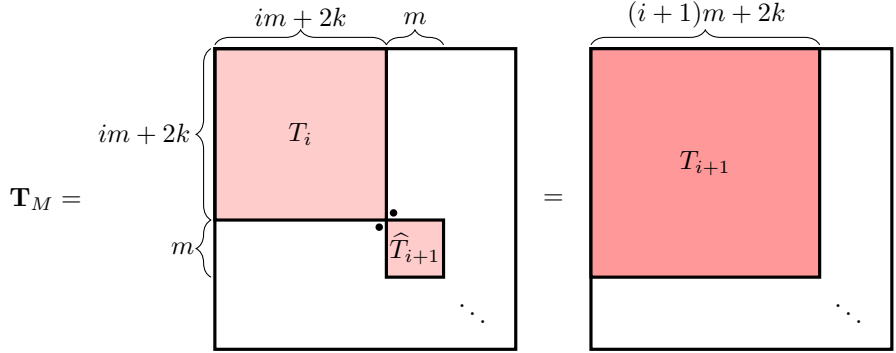

\subsubsection{Recursive computation of rational Krylov subspace bases} 

Let us recall that \cref{alg: naive} performs an \emph{a posteriori} compression of the Lanczos basis
$\mathbf{Q}_M$ by multiplying it with a basis of the rational Krylov subspace $\rat(\mathbf{T}_M,\vec e_1, \vec{\xi}_k)$. Turning this into an \emph{on the fly} compression will allow us to avoid storing $\mathbf{Q}_M$. For this purpose, we will define two recursive sequences of rational Krylov subspace bases.
Our construction \emph{implicitly} leverages a rational variant of the Sherman–Morrison–Woodbury formula~\cite{Bec2,Bernstein2000}, using
that the diagonal blocks $T_{i}$ and $\widehat T_{i+1}$ in $T_{i+1}$ are coupled via a rank-$2$ update.

We will now define, recursively, a primary sequence $U_i$ that is used for compressing $\mathbf{Q}_M$, and
an auxiliary sequence $W_i$ that is used for keeping track of updates to rational functions of $T_i$.
For this purpose, we first choose orthonormal bases $W_1 \in \mathbb{R}^{(m+2k) \times 2k}$ and $\widetilde{U}_1 \in \mathbb{R}^{2k \times k}$ such that
\begin{equation*}
\mathrm{span}(W_1) = \rat(T_1, [\vec e_1,\vec{e}_{m+2k}], \vec{\xi}_k),
\quad
\mathrm{span}(\widetilde U_1) = \rat(W_1^T T_1 W_1, W_1^T \vec{e}_1, \vec{\xi}_k),
\end{equation*}
and we set $U_1 := W_1 \widetilde{U}_1$.

To proceed from $i$ to $i+1$ for $i \ge 1$, we first update $W_i$ as follows:
\begin{equation}\label{eqn: tildeW}
W_{i+1} := \begin{bmatrix} W_i & 0 \\ 0 & I_m \end{bmatrix} \widetilde{W}_{i+1},
\end{equation}
where $\widetilde{W}_{i+1}\in \R^{(m+2k)\times 2k}$ is an orthonormal basis of 
$
 \rat\Big( S_{i+1}, \begin{bmatrix} \vec{w}_{i} & 0\\ 0 & \vec{e}_m \end{bmatrix}, \vec{\xi}_k \Big)
$
with 
 \begin{equation}\label{eqn:Si}
S_{i+1} : = \begin{bmatrix} W_i & 0 \\ 0 & I_m \end{bmatrix}^T T_{i+1} \begin{bmatrix} W_i & 0 \\ 0 & I_m \end{bmatrix} \in \R^{(m+2k)\times (m+2k)}, \quad
\vec{w}_i := W_i^T \vec{e}_1 \in \R^{2k}.
\end{equation}
We then obtain $U_{i+1}$, the next element of the primary sequence, as follows:
\begin{equation}\label{eqn: tildeU}
U_{i+1} := \begin{bmatrix} W_i & 0 \\ 0 & I_m \end{bmatrix} \widetilde{W}_{i+1} \widetilde{U}_{i+1} = W_{i+1} \widetilde{U}_{i+1},
\end{equation}
where $\widetilde{U}_{i+1}\in \R^{2k\times k}$ is an orthonormal basis of
$\rat\big(\widetilde{S}_{i+1}, \vec{w}_{i+1}, \vec{\xi}_k\big)$ with
\begin{equation}\label{eqn:tildeSi}
\widetilde{S}_{i+1} := \widetilde{W}_{i+1}^T S_{i+1} \widetilde{W}_{i+1} \in \R^{2k\times 2k}.
\end{equation}

\cref{prop:tildeW} below shows that the elements $U_i$ of the primary sequence, constructed as described above, from orthonormal bases
for $\rat(T_i,\vec e_1, \vec{\xi}_k)$, $i = 1,\ldots, s$. 
In particular, $U_s$ matches our desired rational Krylov subspace:
\[
 \mathrm{span}(U_s) = \mathrm{span}(\mathbf{U}_{M,k}) = \rat(\mathbf{T}_M,\vec e_1, \vec{\xi}_k).
\]
Because of $\mathbf{Q}_M = Q_s$, it follows that 
$\mathrm{span}(\mathbf{Q}_M \mathbf{U}_{M,k})=\mathrm{span}(Q_s U_s)$ and, hence, 
we can replace $\mathbf{Q}_M \mathbf{U}_{M,k}$ by $Q_s U_s$ for our purposes.

\subsubsection{Recursive computation of $Q_s U_s$ and residual estimation}

By the discussion above, our main objective is to compute the $N\times k$ matrix $Q_s U_s$, without actually storing the (large) $N\times M$ matrix $Q_s$.
For this purpose, we first compute $Q_1 W_1$ and then update
\begin{equation} \label{eq:upQW}
  Q_{i+1}W_{i+1} = [Q_iW_i,\, \widehat{Q}_{i+1}]\widetilde{W}_{i+1} \in \R^{N\times 2k}, \quad i = 1,\ldots,s-1,
\end{equation}
in accordance with~\cref{eqn: tildeW}.

In each cycle, we thus need to hold exactly $m+2k+1$ vectors of length $N$: the $2k$ columns of the compressed
matrix $Q_iW_i$, the $m$ columns of the newly produced matrix $\widehat{Q}_{i+1}$ and the Lanczos vector $\vec q_{(i+1)m+2k+1}$ which is required to extend the Lanczos basis in the next cycle. In the final cycle $s$, we compute the desired matrix
\begin{equation} \label{eq:upQU}
 Q_{s}U_{s} = [Q_{s-1} W_{s-1},\, \widehat{Q}_{s}](\widetilde{W}_{s}\widetilde{U}_{s}),
\end{equation}
in accordance with~\cref{eqn: tildeW,eqn: tildeU}.

As the total number of Lanczos iterations / cycles is usually not known in advance, we will use the update~\cref{eq:upQW}
until a residual estimate indicates that the algorithm can be terminated, in which case~\cref{eq:upQU}
is computed. Using the result of Lemma~\ref{lemma:resdecomp}, the residual norm after the $i$th cycle can be estimated without needing access to the full matrix $U_i$. Specifically, the first term in the bound~\cref{eqn: err} can be computed using the relation
\begin{equation} \label{eqn:comp-res}
\beta_{im+2k} \left\lVert \vec{e}_{im+2k}^T U_i Y_i \right\rVert_2 =
\beta_{im+2k} \big\| \vec{e}_{im+2k}^T W_i \widetilde U_i Y_i  \big\|_2
= \beta_{im+2k} \big\| \vec{e}_{m+2k}^T \widetilde{W}_i \widetilde{U}_i  Y_i \big\|_2,
\end{equation}  
where the matrix $Y_i$ satisfies the Lyapunov equation  
\begin{equation*}
(\widetilde{U}_i^T \widetilde{S}_i \widetilde{U}_i) Y_i + Y_i (\widetilde{U}_i^T \widetilde{S}_i \widetilde{U}_i) = \| \vec{c} \|_2^2 \cdot  (\widetilde{U}_i^T \vec{w}_i)(\widetilde{U}_i^T \vec{w}_i)^T.
\end{equation*}

\subsubsection{Recursive computation of $S_i$}

Because the size of $W_i \in \R^{(im+2k) \times 2k}$ grows with $i$, its explicit use and storage is best avoided. This matrix is needed in the update~\cref{eqn:Si} of $S_i$.

Noting that $ W_i^T \vec{e}_{im+2k} = \widetilde{W}_i^T \vec{e}_{m+2k} $, using the definition~\cref{eqn:tildeSi} of $\widetilde{S}_i$, it follows that the matrix $ S_{i+1} $ can be computed as  
\begin{equation*}
S_{i+1}  =
\begin{bmatrix}
\widetilde{S}_i & \beta_{im+2k}(\widetilde{W}_i^T \vec{e}_{m+2k})\vec{e}_1^T\\  
\beta_{im+2k} \vec{e}_1 (\widetilde{W}_i^T \vec{e}_{m+2k})^T & \widehat{T}_{i+1}  
\end{bmatrix}.
\end{equation*}
This will allow us to implement our algorithm without storing $W_i$.

\subsection{Theoretical results}\label{sec:theoretical}

In the following, we present the theoretical results required to justify our algorithm. Our main goal is to prove that the matrix $U_{i+1}$ is an orthonormal basis for $\rat(T_{i+1}, \vec{e}_1, \vec{\xi}_k)$.

The next theorem provides a low-rank update formula for evaluating rational matrix functions. We present the specific result required for this work; more general results can be found in \cite{Bec2}, \cite[Sec 2.2]{Cas}.

\begin{theorem}\label{thm: lowrankupdate}
For a list of poles $\vec{\xi}_k \in \mathbb{C}^k$ closed under complex conjugation, set $q(z) = (z-\xi_1) \cdots (z-\xi_k)$
and consider a rational function $r = p/q$ for some $p \in \mathcal P_{k-1}$.
Let $V_i$ be an orthonormal basis of $\rat(T_{i},[\vec{e}_1, \vec{e}_{im+2k}],\vec{\xi}_k)$. Then there exists a matrix $M_{i+1}(r) \in \mathbb{R}^{(m+2k) \times ((i+1)m+2k)}$ such that

\[
r(T_{i+1}) = \begin{bmatrix} r(T_{i}) & 0 \\ 0 & 0 \end{bmatrix} + \begin{bmatrix} V_i & 0 \\ 0 & I_m \end{bmatrix} M_{i+1}(r),
\]
provided that the nested tridiagonal matrices $T_{i}, T_{i+1}$ defined 
in~\cref{eqn:Ti}  
do not have eigenvalues that are contained in $\vec{\xi}_k$.
\end{theorem}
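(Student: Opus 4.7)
The plan is to leverage the rank-$2$ coupling visible in~\cref{eqn:Ti}. Writing $D_{i+1} := \mathrm{diag}(T_i,\widehat T_{i+1})$, $\vec{f}_1 := \vec{e}_{im+2k}$ and $\vec{f}_2 := \vec{e}_{im+2k+1}$ in $\R^{(i+1)m+2k}$, one has
\[
T_{i+1} \;=\; D_{i+1} + \beta_{im+2k}\bigl(\vec{f}_1\vec{f}_2^T + \vec{f}_2\vec{f}_1^T\bigr).
\]
My strategy is to first establish the identity for the simple resolvent $r_\xi(z) := (z-\xi)^{-1}$ with $\xi \in \vec{\xi}_k$ via the Sherman--Morrison--Woodbury (SMW) formula, and then extend to any $r = p/q$ by linearity after partial-fraction decomposition.

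For $r_\xi$, set $U = [\vec{f}_1,\vec{f}_2]$ and $V = \beta_{im+2k}[\vec{f}_2,\vec{f}_1]$, so that $T_{i+1}-\xi I = (D_{i+1}-\xi I) + UV^T$ and SMW gives
\[
(T_{i+1}-\xi I)^{-1} = (D_{i+1}-\xi I)^{-1} - (D_{i+1}-\xi I)^{-1} U\,\Sigma_\xi^{-1}\, V^T (D_{i+1}-\xi I)^{-1},
\]
with $\Sigma_\xi := I_2 + V^T(D_{i+1}-\xi I)^{-1}U$. Invertibility of $T_{i+1}-\xi I$ and $T_i-\xi I$ is given, and that of $\widehat T_{i+1}-\xi I$ (required to invert $D_{i+1}-\xi I$) follows in the Lanczos setting from Cauchy interlacing applied to the symmetric principal submatrix $\widehat T_{i+1} \subset T_{i+1}$. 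Because $D_{i+1}$ is block diagonal, $(D_{i+1}-\xi I)^{-1}\vec{f}_1$ coincides with $(T_i-\xi I)^{-1}\vec{e}_{im+2k}$ on the top $im+2k$ rows and vanishes below, so it lies in $\rat(T_i,\vec{e}_{im+2k},\vec{\xi}_k) \subseteq \operatorname{range}(V_i)$; analogously, $(D_{i+1}-\xi I)^{-1}\vec{f}_2$ is supported on the last $m$ rows and hence lies in the $I_m$ column block. The entire SMW correction therefore factors through $\mathrm{diag}(V_i,I_m)$. Combining this with the splitting $(D_{i+1}-\xi I)^{-1} = \mathrm{diag}(r_\xi(T_i),0) + \mathrm{diag}(0,r_\xi(\widehat T_{i+1}))$, whose second summand is again supported on the last $m$ rows, finishes the argument for $r=r_\xi$.

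For a general $r = p/q$ with $p \in \mathcal{P}_{k-1}$, use partial fractions to write $r(z) = \sum_{j,\ell} c_{j\ell}(z-\xi_j)^{-\ell}$, where $\ell$ runs up to the multiplicity $m_j$ of $\xi_j$ in $\vec{\xi}_k$; by linearity of the target identity in $r$ (and in $M_{i+1}(r)$), it suffices to handle each term $r(z) = (z-\xi)^{-\ell}$. The case $\ell = 1$ was just proved; for $\ell \ge 2$, differentiating the $\ell=1$ identity $\ell-1$ times with respect to $\xi$ produces an analogous decomposition whose correction has columns spanned by $(D_{i+1}-\xi I)^{-j}\vec{f}_1$ and $(D_{i+1}-\xi I)^{-j}\vec{f}_2$ for $j \le \ell$, with top-block contributions $(T_i-\xi I)^{-j}\vec{e}_{im+2k}$ still inside $\rat(T_i,\vec{e}_{im+2k},\vec{\xi}_k)$ precisely because $\xi$ must appear with multiplicity at least $\ell$ in $\vec{\xi}_k$ for such a term to arise from some $p/q$ with $p\in\mathcal{P}_{k-1}$. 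The main technical obstacle is exactly this bookkeeping for repeated poles; in the practically relevant Zolotarev case the $\xi_j$ are distinct and only first-order resolvents appear, in which case the entire argument collapses to the single rank-$2$ SMW calculation above.
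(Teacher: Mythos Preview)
Your argument is essentially correct and follows the same underlying idea as the paper---a low-rank update formula for rational matrix functions applied to the rank-$2$ coupling in~\cref{eqn:Ti}. The paper, however, does not spell out the Sherman--Morrison--Woodbury computation: it simply cites \cite[Corollary~2.6]{Cas}, which directly gives that the first $im+2k$ rows of $r(T_{i+1})$ equal $[\,r(T_i)\ \ 0\,] + Z_i R_i(r)$ with $Z_i$ an orthonormal basis of $\rat(T_i,\vec{e}_{im+2k},\vec{\xi}_k)\subseteq\mathrm{span}(V_i)$; the last $m$ rows are then trivially absorbed by the $I_m$ block. Your route is more self-contained and makes the mechanism (SMW plus partial fractions) explicit, which has pedagogical value; the paper's route is shorter but opaque without the cited reference.

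One point does need repair. Your justification ``invertibility of $\widehat T_{i+1}-\xi I$ follows in the Lanczos setting from Cauchy interlacing'' is not what Cauchy interlacing gives: interlacing only places the eigenvalues of $\widehat T_{i+1}$ inside the spectral interval of $T_{i+1}$, which does not exclude a specific $\xi$ lying in that interval. The theorem as stated assumes nothing about $\widehat T_{i+1}$, so this is a genuine (if minor) gap in the general case. The fix is easy: for fixed $\vec{\xi}_k$ (hence fixed $V_i$), perturb $\widehat T_{i+1}$ so that $\xi$ avoids its spectrum, apply your SMW argument to the perturbed $T_{i+1}$, and pass to the limit---the column-space containment survives because $\mathrm{range}\bigl(\mathrm{diag}(V_i,I_m)\bigr)$ is closed and $r_\xi(T_{i+1})$ depends continuously on the perturbation as long as $\xi\notin\Lambda(T_{i+1})$. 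In the paper's actual setting (positive definite $T_{i+1}$, poles with negative or non-real part) the issue never arises, but you should either state that restriction or include the continuity patch.
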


\begin{proof} Applying \cite[Corollary 2.6]{Cas} to the partitioning~\cref{eqn:Ti}, we can express the first $im + 2k$ rows of $r(T_{i+1})$ as  
\[
\begin{bmatrix} r(T_{i}) & 0 \end{bmatrix} + Z_i R_i(r),
\]  
where $Z_i \in \mathbb{R}^{(im+2k) \times k}$ is an orthonormal basis of $\rat(T_i, \vec{e}_{im+2k}, \vec{\xi}_k)$ and $R_i(r) \in \mathbb{R}^{k \times ((i+1)m + 2k)}$. Because of $\operatorname{span}(V_i) \supseteq \rat(T_i, \vec{e}_{im+2k}, \vec{\xi}_k)$, this implies the result.
\end{proof}

\Cref{thm: lowrankupdate} allows us to establish a connection between the rational Krylov subspaces involved in the $i$th and $(i+1)$th cycles of our algorithm.

\begin{corollary}
Under the assumptions of~\Cref{thm: lowrankupdate}, it holds that
\begin{equation}\label{eqn: inclusions} 
\rat(T_{i+1}, \vec{e}_1, \vec{\xi}_k) \subseteq\rat(T_{i+1},[\vec{e}_1, \vec{e}_{(i+1)m+2k}],\vec{\xi}_k)  \subseteq \mathrm{span}\left( \begin{bmatrix} V_i & 0 \\ 0 & I_m \\ \end{bmatrix} \right).
\end{equation}
\end{corollary}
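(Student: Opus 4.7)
The first inclusion is immediate from the definition of rational Krylov subspaces: augmenting the starting block vector can only enlarge the subspace, so $\rat(T_{i+1}, \vec{e}_1, \vec{\xi}_k) \subseteq \rat(T_{i+1},[\vec{e}_1, \vec{e}_{(i+1)m+2k}],\vec{\xi}_k)$. The substantive content is the second inclusion, which the plan tackles by directly invoking \Cref{thm: lowrankupdate}.

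The strategy is to let $r = p/q$ be an arbitrary rational function of the form $r \in \mathcal{R}_{k-1,k}$ with denominator fixed to $q(z) = (z-\xi_1)\cdots(z-\xi_k)$, and then examine the two columns $r(T_{i+1})\vec{e}_1$ and $r(T_{i+1})\vec{e}_{(i+1)m+2k}$ separately; showing that both lie in $\mathrm{span}\left( \begin{bmatrix} V_i & 0 \\ 0 & I_m \end{bmatrix} \right)$ yields the claim, since these vectors (as $r$ ranges over $\mathcal{R}_{k-1,k}$) span $\rat(T_{i+1},[\vec{e}_1, \vec{e}_{(i+1)m+2k}],\vec{\xi}_k)$. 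Applying \Cref{thm: lowrankupdate} gives the decomposition
\[
r(T_{i+1}) = \begin{bmatrix} r(T_i) & 0 \\ 0 & 0 \end{bmatrix} + \begin{bmatrix} V_i & 0 \\ 0 & I_m \end{bmatrix} M_{i+1}(r),
\]
whose second summand automatically has columns in $\mathrm{span}\left( \begin{bmatrix} V_i & 0 \\ 0 & I_m \end{bmatrix} \right)$, so it suffices to control the first summand on each of the two starting vectors.

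For the starting vector $\vec{e}_1$, the first summand produces $\begin{bmatrix} r(T_i)\vec{e}_1 \\ 0 \end{bmatrix}$. Since $r(T_i)\vec{e}_1 \in \rat(T_i, \vec{e}_1, \vec{\xi}_k) \subseteq \rat(T_i, [\vec{e}_1, \vec{e}_{im+2k}], \vec{\xi}_k) = \mathrm{span}(V_i)$, this vector is a linear combination of the columns of $\begin{bmatrix} V_i \\ 0 \end{bmatrix}$, hence lies in the desired span. For the starting vector $\vec{e}_{(i+1)m+2k}$, the first summand evaluates to $\vec{0}$: the nonzero entry of $\vec{e}_{(i+1)m+2k}$ sits in the last $m$ coordinates, corresponding precisely to the zero block of $\begin{bmatrix} r(T_i) & 0 \\ 0 & 0 \end{bmatrix}$. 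Thus $r(T_{i+1})\vec{e}_{(i+1)m+2k}$ reduces entirely to the second summand, which is trivially in the span.

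There is no serious obstacle here; the proof is essentially an unpacking of \Cref{thm: lowrankupdate} together with the index bookkeeping that ensures (i) $r(T_i)\vec{e}_1$ is absorbed by $V_i$ because $V_i$ was chosen to encode both the leading and trailing starting vectors, and (ii) the ``tail'' starting vector $\vec{e}_{(i+1)m+2k}$ kills the first block entirely. The only point demanding a small remark is the standing assumption that the poles $\vec{\xi}_k$ avoid the spectra of $T_i$ and $T_{i+1}$ (needed so that \Cref{thm: lowrankupdate} applies), which is inherited from the hypotheses of \Cref{thm: lowrankupdate} itself.
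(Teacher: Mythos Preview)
Your proof is correct and follows essentially the same route as the paper: both invoke the decomposition from \Cref{thm: lowrankupdate}, treat the two starting vectors $\vec{e}_1$ and $\vec{e}_{(i+1)m+2k}$ separately, and observe that $r(T_i)\vec{e}_1 \in \mathrm{span}(V_i)$ while the first summand vanishes on the trailing vector. Your write-up is slightly more explicit about the bookkeeping, but the argument is the same.
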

\begin{proof}
The first inclusion holds by the definition. For the second inclusion, we utilize the result of \cref{thm: lowrankupdate}, which implies for a rational function $r$ (in the sense of the theorem) that
\[
r(T_{i+1}) \vec{e}_1 \in \begin{bmatrix} r(T_{i}) \vec{e}_1 \\ 0 \end{bmatrix} + \mathrm{span}\left(\begin{bmatrix} V_i & 0 \\ 0 & I_m \\ \end{bmatrix} \right)
 \subseteq \mathrm{span}\left( \begin{bmatrix} V_i & 0 \\ 0 & I_m \\ \end{bmatrix} \right),
\]
and
\[
r(T_{i+1}) \vec{e}_{(i+1)m+2k} \in \mathrm{span}\left( \begin{bmatrix} V_i & 0 \\ 0 & I_m \\ \end{bmatrix} \right).
\]
Therefore, the result follows from the definition of $\rat(T_{i+1},[\vec{e}_1, \vec{e}_{(i+1)m+2k}],\vec{\xi}_k)$.
\end{proof}

The following proposition states the desired main theoretical result.

\begin{proposition}\label{prop:tildeW}
With the notation introduced above, suppose that the 
the nested tridiagonal matrices $T_{1}, \ldots, T_{s}$ 
do not have eigenvalues that are contained in $\vec{\xi}_k$. Then 
the matrices $W_{i}$ and $U_{i}$ are orthonormal bases for the rational Krylov subspaces $\rat(T_{i},[\vec{e}_1, \vec{e}_{im+2k}],\vec{\xi}_k)$ and $\rat(T_{i}, \vec{e}_1, \vec{\xi}_k)$, respectively, for $i = 1,\ldots, s$.
\end{proposition}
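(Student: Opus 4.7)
My plan is to prove the proposition by induction on $i$, using at each step the following \emph{Galerkin exactness} identity: whenever $V$ has orthonormal columns with $\rat(T,\vec{b},\vec{\xi}_k) \subseteq \mathrm{span}(V)$, one has
\[
 r(T)\vec{b} = V\, r(V^T T V)\, V^T \vec{b}
\]
for every rational function $r = p/q$ with $p \in \mathcal{P}_{k-1}$ and $q(z) = \prod_{j=1}^k(z-\xi_j)$. I would justify this by partial fractions (assuming distinct poles, for exposition): writing $r(z) = \sum_j c_j/(z-\xi_j)$, each vector $\vec{u}_j := (T-\xi_j I)^{-1}\vec{b}$ lies in $\mathrm{span}(V)$, so $\vec{u}_j = V\vec{y}_j$. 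Projecting the equation $(T - \xi_j I) V \vec{y}_j = \vec{b}$ by $V^T$ yields $\vec{y}_j = (V^T T V - \xi_j I)^{-1} V^T \vec{b}$; summing over $j$ gives the identity. Non-singularity of $V^T T V - \xi_j I$ is ensured by the hypothesis that no eigenvalue of $T_i$ lies in $\vec{\xi}_k$ and by the fact that the eigenvalues of the symmetric projection $V^T T_i V$ interlace those of $T_i$.

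For the base case $i = 1$, orthonormality of $U_1 = W_1\widetilde{U}_1$ is immediate from that of its two factors. Applying the identity with $V = W_1$, $T = T_1$, $\vec{b} = \vec{e}_1$---which is legitimate because $\rat(T_1,\vec{e}_1,\vec{\xi}_k) \subseteq \rat(T_1,[\vec{e}_1,\vec{e}_{m+2k}],\vec{\xi}_k) = \mathrm{span}(W_1)$---gives $r(T_1)\vec{e}_1 = W_1\,r(\widetilde{S}_1)\,\vec{w}_1$ for every admissible $r$. Letting $r$ vary over the allowed family then yields
$\rat(T_1,\vec{e}_1,\vec{\xi}_k) = W_1 \cdot \rat(\widetilde{S}_1,\vec{w}_1,\vec{\xi}_k) = \mathrm{span}(W_1 \widetilde{U}_1) = \mathrm{span}(U_1)$, as required.

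For the inductive step, assume the claim for $i$ and set $V := \begin{bmatrix} W_i & 0 \\ 0 & I_m \end{bmatrix}$, which has orthonormal columns. The corollary following \Cref{thm: lowrankupdate}, applied with $V_i = W_i$ (valid by the inductive hypothesis), yields $\rat(T_{i+1},[\vec{e}_1,\vec{e}_{(i+1)m+2k}],\vec{\xi}_k) \subseteq \mathrm{span}(V)$, so the Galerkin identity can be applied with $\vec{b} = \vec{e}_1$ and with $\vec{b} = \vec{e}_{(i+1)m+2k}$. Combining $V^T T_{i+1} V = S_{i+1}$ with $V^T \vec{e}_1 = \begin{bmatrix}\vec{w}_i \\ 0\end{bmatrix}$ and $V^T \vec{e}_{(i+1)m+2k} = \begin{bmatrix}0 \\ \vec{e}_m\end{bmatrix}$ then gives
\[
\rat(T_{i+1},[\vec{e}_1,\vec{e}_{(i+1)m+2k}],\vec{\xi}_k) = V\cdot\rat\!\left(S_{i+1},\begin{bmatrix}\vec{w}_i & 0 \\ 0 & \vec{e}_m\end{bmatrix},\vec{\xi}_k\right) = \mathrm{span}(V\widetilde{W}_{i+1}) = \mathrm{span}(W_{i+1}),
\]
which together with the orthonormality of $V\widetilde{W}_{i+1}$ establishes the claim for $W_{i+1}$. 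The claim for $U_{i+1}$ then follows by repeating the base-case argument with $(W_{i+1},\widetilde{S}_{i+1},\vec{w}_{i+1})$ in place of $(W_1,\widetilde{S}_1,\vec{w}_1)$.

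The one delicate point I anticipate is extending the partial-fractions step to lists $\vec{\xi}_k$ with repeated poles, where the expansion contains higher-order terms $(z-\xi)^{-\ell}$. The projection argument still carries through by induction on $\ell$, exploiting that $(T-\xi I)^{-\ell}\vec{b}$ remains in $\rat(T,\vec{b},\vec{\xi}_k)$ whenever $\xi$ has multiplicity at least $\ell$ in $\vec{\xi}_k$; alternatively, one may invoke a standard Galerkin-type exactness result for rational Krylov subspaces to dispatch both cases uniformly.
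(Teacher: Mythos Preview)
Your argument follows the same inductive skeleton as the paper's proof: both use the corollary of \Cref{thm: lowrankupdate} to obtain the inclusion $\rat(T_{i+1},[\vec{e}_1,\vec{e}_{(i+1)m+2k}],\vec{\xi}_k)\subseteq\mathrm{span}(V)$ and then pass to the projected problem. The only difference is that where the paper invokes \cite[Proposition~2.3]{Cas} as a black box, you supply its content directly via the Galerkin exactness identity and partial fractions---so your proof is more self-contained but otherwise equivalent.

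One small point deserves tightening: your justification that $V^T T_i V-\xi_j I$ is nonsingular via Cauchy interlacing does not quite work as stated. Interlacing only guarantees that the eigenvalues of $V^T T_i V$ lie in $[\lambda_{\min}(T_i),\lambda_{\max}(T_i)]$, not that they form a subset of the eigenvalues of $T_i$; a pole $\xi_j$ could sit strictly inside this interval without being an eigenvalue of $T_i$ and still be an eigenvalue of the projection. In the paper's setting this is a non-issue---the algorithm's construction already presumes that all the projected rational Krylov subspaces $\rat(\widetilde S_i,\vec w_i,\vec\xi_k)$ and $\rat(S_{i+1},\cdot,\vec\xi_k)$ are well-defined (see the standing assumption in Section~\ref{sec: premnot}), and in the intended application the Zolotarev poles lie outside $[\lambda_{\min},\lambda_{\max}]$ so interlacing \emph{does} give what you want. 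But you should either state that extra assumption explicitly or appeal to it.
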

\begin{proof}
We proceed by induction on $i$. For $i = 1$, the matrix $W_1$ is an orthonormal basis of $\rat(T_{1},[\vec{e}_1, \vec{e}_{m+2k}],\vec{\xi}_k)$ by definition, while the claim for $U_1 = W_1 \widetilde{U}_1$ follows from \cite[Proposition~2.3]{Cas}, noting that $\rat(T_{1},\vec{e}_1,\vec{\xi}_k) \subseteq \mathrm{span}(W_1)$.

Assume now that the claim holds for $W_i$ and $U_i$, and let us prove it for $W_{i+1}$ and $U_{i+1}$.
By the induction hypothesis, the inclusions \cref{eqn: inclusions} hold with $V_i = W_i$. Then, by the second inclusion in \cref{eqn: inclusions}, \cite[Proposition~2.3]{Cas} ensures that the matrix $W_{i+1}$ defined in~\cref{eqn: tildeW} forms an orthonormal basis for $\rat(T_{i+1},[\vec{e}_1, \vec{e}_{(i+1)m+2k}],\vec{\xi}_k)$. Similarly, applying \cite[Proposition~2.3]{Cas} to the first inclusion in \cref{eqn: inclusions} guarantees that an orthonormal basis for $\rat(T_{i+1}, \vec{e}_1, \vec{\xi}_k)$ is given by $W_{i+1} \widetilde{U}_{i+1}$, which equals $U_{i+1}$.
\end{proof}

\subsection{Practical implementation}\label{sub: praimp}

The procedure described above for solving the symmetric Lyapunov equation~\cref{eqn: lowlyapintro} is summarized in \Cref{alg: lyapcomp}.

In practice, the poles $\vec{\xi}_k$ are chosen as Zolotarev poles, with $k$ such that the error bound~\cref{eqn:zolbound}, multiplied by $\lambda_{\max} / \lambda_{\min}$, remains below $\mathtt{tol} /{2}$, where $\mathtt{tol}\cdot \lVert\vec c\rVert_2^2$ is a prescribed tolerance on the residual norm. If no bounds for the extremal eigenvalues of $A$ are known a priori, we estimate them in an ad hoc fashion, by computing the minimum and maximum eigenvalues of the projected matrix $T_1$, obtained before the first compression, and multiplying them by $0.1$ and $1.1$, respectively. To ensure a good approximation of the eigenvalues, we also perform full reorthogonalization during the first Lanczos cycle. We assess whether the residual norm falls below $\mathtt{tol} \cdot \lVert \vec{c} \rVert_2^2$ at the end of each cycle by checking if \cref{eqn:comp-res} drops below $\mathtt{tol}\cdot\lVert \vec c\rVert_2^2/{2}$. 

During the algorithm, orthonormal bases for rational Krylov subspaces are computed using the block rational Arnoldi algorithm.

\begin{algorithm}
\caption{Lanczos with compression for symmetric Lyapunov (\texttt{compress})}
\begin{algorithmic}[1]\label{alg: lyapcomp}
\Require Symmetric positive definite $A \in \mathbb{R}^{N \times N}$, $\vec{c} \in \mathbb{R}^N$, list of $k$ poles $\vec{\xi}_k$ closed under complex conjugation and relative tolerance $\mathtt{tol}$.
\Ensure Approximation $X_{\mathtt{ref}}$  in factored form to the solution of the Lyapunov equation~\cref{eqn: lowlyapintro} .
\State Perform $m+2k$ Lanczos iterations (\cref{alg: lanc}) to compute orthonormal basis $[Q_1,\vec q_{m+2k+1}]$ of $\mathcal{K}_{m+2k+1}(A,\vec c)$, and $(m+2k)\times (m+2k)$ tridiagonal matrix $T_1$;
\State Compute orthonormal basis $W_1$ of $\rat(T_1, [\vec{e}_1, \vec{e}_{(m+2k)}], \vec{\xi}_k)$ and set $\widetilde{W}_1 = W_1$; 
\State Compute $\widetilde{S}_1 = W_1^T T_1 W_1$, $\vec{w}_1 = W_1^T\vec{e}_1$ and orthonormal basis $\widetilde{U}_1$ of $\mathcal{Q}(\widetilde{S}_1, \vec w_1, \vec \xi_k)$;
\State Compute $Y_1$ as solution of \begin{equation*}(\widetilde{U}_1^T\widetilde{S}_1\widetilde{U}_1) Y_1 + Y_1(\widetilde{U}_1^T\widetilde{S}_1\widetilde{U}_1)= \| \vec{c}\|_2^2(\widetilde{U}_1^T\vec{w}_1)(\widetilde{U}_1^T\vec{w}_1)^T \end{equation*}
by diagonalizing $\widetilde{U}_1^T\widetilde{S}_1\widetilde{U}_1$;
\If {residual norm is smaller than $\mathtt{tol}\cdot \| \vec{c} \|_2^2$}
    \State \Return $X_{\mathtt{ref}} =  (Q_1W_1\widetilde{U}_1)Y_1 (Q_1W_1\widetilde{U}_1)^T$; 
\EndIf

\State Compute compressed basis $Q_1 W_1 \in \mathbb{R}^{N \times 2k}$;
\State Keep last column $\vec q_{m+2k}$ of $Q_1$ and $\vec{q}_{m+2k+1}$ in memory;
\For{$i = 1, \dots$}
\State Perform $m$ Lanczos iterations, starting from $\vec{q}_{im+2k}$ and $\vec{q}_{im+2k+1}$, to compute
$\widehat T_{i+1}\in \R^{m\times m}$, $\widehat{Q}_{i+1} \in \R^{N \times m}$, and next Lanczos vector $\vec{q}_{(i+1)m+2k+1}$;
\State Compute $S_{i+1} = \begin{bmatrix}
    \widetilde{S}_i &  \beta_{im+2k}(\widetilde{W}_i^T\vec{e}_{m+2k})\vec{e}_1^T \\ \beta_{im+2k}\vec{e}_1(\widetilde{W}_i^T\vec{e}_{m+2k})^T & \widehat{T}_{i+1}
\end{bmatrix}$;
\State Compute orthonormal basis  $\widetilde{W}_{i+1}$ of $\rat\Big( S_{i+1}, \begin{bmatrix} \vec{w}_{i} & 0\\ 0 & \vec{e}_m \\ \end{bmatrix}, \vec{\xi}_k \Big)$;
\State Compute $\widetilde{S}_{i+1} = \widetilde{W}_{i+1}^T S_{i+1}\widetilde{W}_{i+1}$, $\vec{w}_{i+1} = \widetilde{W}_{i+1}^T \begin{bmatrix}\vec{w}_{i}\\ 0 \end{bmatrix}$, and orthonormal basis  $\widetilde{U}_{i+1}$ of $\mathcal{Q}(\widetilde{S}_{i+1}, \vec w_{i+1}, \vec \xi_k)$;
\State Compute $Y_{i+1}$ as solution of \begin{equation*}(\widetilde{U}_{i+1}^T\widetilde{S}_{i+1}\widetilde{U}_{i+1}) Y_{i+1} + Y_{i+1}(\widetilde{U}_{i+1}^T\widetilde{S}_{i+1}\widetilde{U}_{i+1}) = \| \vec{c}\|_2^2(\widetilde{U}_{i+1}^T\vec{w}_{i+1})(\widetilde{U}_{i+1}^T\vec{w}_{i+1})^T\end{equation*} 
by diagonalizing $\widetilde{U}_{i+1}^T\widetilde{S}_{i+1}\widetilde{U}_{i+1}$;
\If {residual norm is smaller than $\mathtt{tol}\cdot \| \vec{c} \|_2^2$}
     \State \Return $X_{\mathtt{ref}} =   ([Q_{i} W_i, \widehat Q_{i+1}]\widetilde {W}_{i+1}\widetilde{U}_{i+1})Y_{i+1}([Q_{i} W_i, \widehat Q_{i+1}]\widetilde {W}_{i+1}\widetilde{U}_{i+1})^T$;
\EndIf
\State Compute compressed basis $Q_{i+1}W_{i+1} = [Q_{i}W_{i}, \widehat{Q}_{i+1}] \widetilde{W}_{i+1}$;
\State Keep last column $\vec q_{(i+1)m+2k}$ of $\widehat Q_{i+1}$ and $\vec{q}_{(i+1)m+2k+1}$ in memory.
\EndFor
\end{algorithmic}
\end{algorithm}

\begin{remark}
\Cref{alg: lyapcomp} directly extends to Sylvester matrix equations of the form  
\[
A_1 X + X A_2 = \vec{c}_1 \vec{c}_2^T,
\]  
where $A_1 \in \mathbb{R}^{N_1 \times N_1}$ and $A_2 \in \mathbb{R}^{N_2 \times N_2}$ are symmetric positive definite matrices.  
In this setting, two separate Lanczos processes are required—one for $A_1$ and one for $A_2$—and two rational Krylov subspaces must be computed iteratively, following the same approach as in \Cref{alg: lyapcomp}.
The poles can still be chosen as Zolotarev poles; however, the intervals defining the Zolotarev function are generally asymmetric in this case. This issue can be addressed by applying a M\"obius transformation to map both intervals onto symmetric ones, as described in \cite[Sec.~3.2]{Bec3}.
The residual norm can be bounded by adapting \cref{lemma:resdecomp}, from which an efficient method to estimate it can also be derived.
\end{remark}
\section{Finite precision behavior of Lanczos method for Lyapunov equations} 
\label{sec:finiteprec}

It is well known that roundoff error severely affects the orthogonality of the basis produced by the Lanczos process. Because
the Lanczos basis is not kept in memory, this issue cannot be mitigated by reorthogonalization in the context of \Cref{alg: lyapcomp}. On the other hand, it is well understood that this loss of orthogonality only delays but does not destroy convergence of 
finite-precision Lanczos methods. Such stability results have been obtained for Lanczos method applied to 
eigenvalue problems~\cite{Pai2}, linear systems~\cite{Dru}, and matrix functions~\cite{Che,Dru}.

In this section, we adapt the analysis of~\cite{Dru} for linear systems to derive results for finite-precision Lanczos method applied to symmetric Lyapunov equations and~\Cref{alg: lyapcomp}.
For this purpose, we let $\epsilon$ denote unit roundoff and define the quantities
\[
    \epsilon_0 = 2(N + 4)\epsilon, \quad 
    \epsilon_1 = 2\left(7 + s \frac{\| |A| \|_2}{\| A \|_2}\right)\epsilon, \quad
    \epsilon_2 = \sqrt{2} \max \{6\epsilon_0, \epsilon_1\},
\]
where $|A|$ denotes the elementwise absolute value, and $s$ denotes the maximum number of nonzeros in any row of $A$. Denote with $\widecheck{\b{Q}}_M,\ \widecheck{\b{T}}_M,\ \widecheck{\vec{q}}_{M+1},\ \widecheck{\beta}_M$ the quantities returned by finite precision Lanczos process \cref{alg: lanc}. Following Paige's analysis~\cite{Pai}, the roundoff error introduced during the Lanczos process leads to a perturbed Lanczos decomposition of the form
\begin{equation}\label{eqn: finprelan}
A\widecheck{\b{Q}}_M = \widecheck{\b{Q}}_M \widecheck{\b{T}}_M + \widecheck{\beta}_M \widecheck{\vec{q}}_{M+1} \vec{e}_M^T + F_M.
\end{equation}
The matrix $\widecheck{\b{T}}_M$ is (still) tridiagonal and symmetric; its spectrum $\Lambda(\widecheck{\b{T}}_M)$ is known to satisfy
\begin{equation} \label{eq:spectrumtm}
 \Lambda(\widecheck{\b{T}}_M) \subset [\lambda_{\min} - M^{5/2}\epsilon_2\| A \|_2, \lambda_{\max} + M^{5/2}\epsilon_2\| A\|_2],
\end{equation}
where we recall that $\lambda_{\min}$, $\lambda_{\max}$ are the smallest/largest eigenvalues of $A$; see~\cite[Thm 2.1]{Dru} and \cite[Eq. (3.48)]{Pai3}. The error term $F_M$ satisfies (under mild conditions on $\epsilon_0$ and $\epsilon_1$)
\begin{equation}
 \label{eq:boundfm}
 \| F_M\|_F \leq \sqrt{M}\epsilon_1\| A \|_2,
\end{equation}
see~\cite[Eq. (21)]{Dru}. According to~\cite[Eq. (22)]{Dru}, it holds that
\begin{equation}\label{eqn: QMnorm} 
\| \widecheck{\b{Q}}_{M}\|_F \leq \sqrt{(1+2\epsilon_0) M}, \quad 
\| \widecheck{\b{Q}}_{M+1}\|_F \leq \sqrt{(1+2\epsilon_0)(M+1)},
\end{equation}
with $\widecheck{\b{Q}}_{M+1} = [\widecheck{\b{Q}}_{M}, \widecheck{\vec{q}}_{M+1}]$.

To simplify considerations, our analysis will focus on the impact of~\cref{eqn: finprelan} on convergence and assume that the rest of the computation (such as the solution of projected Lyapunov equations) is exact. 
In fact, this assumption does not impair our analysis, as the projected Lyapunov equation is solved using a backward stable algorithm. Furthermore, in \Cref{sub: finpreccompress}, full orthogonalization is performed in the rational Arnoldi algorithm.

\subsection{Finite-precision Lanczos without compression}

We start our analysis of finite-precision Lanczos method for the Lyapunov equation~\cref{eqn: lowlyapintro} by assuming that
\begin{equation}\label{eq:assumptionlambdamin}
 \lambda_{\min} > (M+1)^{5/2}\epsilon_2\| A \|_2.
\end{equation}
By~\cref{eq:spectrumtm}, this ensures that $\widecheck{\b{T}}_M$ is positive definite and, hence, the 
projected equation
\[ \widecheck{\b{T}}_MX_M + X_M\widecheck{\b{T}}_M = \| \vec{c} \|_2^2\vec{e}_1\vec{e}_1^T\]
associated with~\cref{eqn: finprelan} has a unique solution $X_M$.

We aim at deriving bounds for the residual 
\begin{equation} \label{eq:fplanres}
 \vec{\rho}_M := A\widecheck{\b{Q}}_MX_M\widecheck{\b{Q}}_M^T + \widecheck{\b{Q}}_MX_M\widecheck{\b{Q}}_M^TA - \vec{c}\vec{c}^T.
\end{equation}
Substituting \cref{eqn: finprelan} into this expression yields
\begin{align}
\vec{\rho}_M &= 
\left( \widecheck{\b{Q}}_M \widecheck{\b{T}}_M 
+ \widecheck{\beta}_M \widecheck{\vec{q}}_{M+1} \vec{e}_M^T 
+ F_M \right) X_M \widecheck{\b{Q}}_M^T \nonumber \\
&\quad + 
\widecheck{\b{Q}}_M X_M 
\left( \widecheck{\b{T}}_M \widecheck{\b{Q}}_M^T 
+ \widecheck{\beta}_M \vec{e}_M \widecheck{\vec{q}}_{M+1}^T 
+ F_M^T \right) \nonumber \\
&\quad - 
\| \vec{c} \|_2^2 
\, \widecheck{\b{Q}}_{M+1} \vec{e}_1 \vec{e}_1^T \widecheck{\b{Q}}_{M+1}^T \nonumber \\
&= 
\widecheck{\b{Q}}_{M+1}
\begin{bmatrix}
\widecheck{\b{T}}_M X_M + X_M \widecheck{\b{T}}_M 
- \| \vec{c} \|_2^2 \vec{e}_1 \vec{e}_1^T 
& 
\widecheck{\beta}_M X_M \vec{e}_M 
\\
\widecheck{\beta}_M \vec{e}_M^T X_M 
& 
0
\end{bmatrix}
\widecheck{\b{Q}}_{M+1}^T \nonumber \\
&\quad + 
F_M X_M \widecheck{\b{Q}}_M^T 
+ \widecheck{\b{Q}}_M X_M F_M^T \nonumber \\
&= 
\widecheck{\b{Q}}_{M+1}
\begin{bmatrix}
0 
& 
\widecheck{\beta}_M X_M \vec{e}_M 
\\
\widecheck{\beta}_M \vec{e}_M^T X_M 
& 
0
\end{bmatrix}
\widecheck{\b{Q}}_{M+1}^T 
+ F_M X_M \widecheck{\b{Q}}_M^T 
+ \widecheck{\b{Q}}_M X_M F_M^T. \label{eqn:rhoM}
\end{align}
Taking the Frobenius norm in~\cref{eqn:rhoM} and using~\cref{eq:boundfm}, we thus obtain
\begin{align}
 \| \vec{\rho} _M\|_F & \le  \sqrt{2} \| \widecheck{\b{Q}}_{M+1}\|_F^2 \|\widecheck{\beta}_M\vec{e}_M^TX_M\|_2 + 2\| \widecheck{\b{Q}}_M \|\|F_M\|_F\| X_M\|_F \nonumber \\
 & \leq \sqrt{2} (1+2\epsilon_0)(M+1) \|\widecheck{\beta}_M\vec{e}_M^TX_M\|_2 + 2\sqrt{1+2\epsilon_0}M \epsilon_1 \|A\|_2 \, \|X_M\|_F \nonumber  \\
 & \leq \sqrt{2} (1+2\epsilon_0)(M+1) \|\widecheck{\beta}_M\vec{e}_M^TX_M\|_2 + \sqrt{1+2\epsilon_0}M\epsilon_1\frac{\|A\|_2\|\vec{c} \|_2^2}{\lambda_{\min}(\widecheck{\b{T}}_M)}, \label{eqn: rhoMbound}
\end{align}
where the last inequality uses \[
\| X_M\|_F \leq \| (\widecheck{\b{T}}_M \otimes I_M + I_M\otimes \widecheck{\b{T}}_M)^{-1}\|_2\|\vec{c}\|_2^2 = \|\vec{c}\|_2^2 / (2 \lambda_{\min}(\widecheck{\b{T}}_M)).                                
                               \]
                               
It remains to discuss the quantity $\|\widecheck{\beta}_M\vec{e}_M^TX_M\|_2$ featuring in the first term of~\cref{eqn: rhoMbound}. For this purpose, we follow~\cite[Sec. 2.3]{Dru} and consider the matrix $\widecheck{\b{T}}_{M+1}$ obtained after one additional iteration of finite-precision Lanczos process. By~\Cref{eq:spectrumtm,eq:assumptionlambdamin}, this matrix is positive definite and, hence, the enlarged projected equation
\begin{equation} \label{eq:enlargedprojected} \widecheck{\b{T}}_{M+1} X_{M+1} + X_{M+1} \widecheck{\b{T}}_{M+1} = \| \vec{c} \|_2^2\vec{e}_1\vec{e}_1^T \end{equation}
also has a unique solution. The quantities
$\widecheck{\b{T}}_M$ and $\widecheck{\beta}_M$ (obtained by the finite-precision Lanczos process) are identical to the corresponding quantities obtained when applying $M$ \emph{exact} Lanczos iterations to $\widecheck{\b{T}}_{M+1}$ with starting vector $\| \vec{c} \|_2 \vec{e}_1$. Now, $\sqrt{2}\|\widecheck{\beta}_M\vec{e}_M^TX_M\|_2$ is the residual norm for the 
approximate solution $\big[ {I_M \atop 0} \big] X_M \big[ {I_M \atop 0} \big]^T$ to~\cref{eq:enlargedprojected} returned by exact 
Lanczos method. This allows us to apply existing convergence results for Krylov subspace methods. In particular,~\cite[Cor. 2.5]{Bec} and~\cite[Eq~(2.11)]{Bec} imply that
\begin{equation} \label{eq:resbound}
 \sqrt{2}\|\widecheck{\beta}_M\vec{e}_M^TX_M\|_2 \le \big(4+4\sqrt{2\kappa_{M+1}}\big)\left( \frac{\sqrt{\widetilde \kappa_{M+1}}-1}{\sqrt{\widetilde\kappa_{M+1}}+1}\right)^M\| \vec{c} \|_2^2,
\end{equation}
where $\kappa_{M+1}$ and $\widetilde\kappa_{M+1}$ are the condition numbers of $\widecheck{\b{T}}_{M+1}$ and 
$\widecheck{\b{T}}_{M+1}+ \lambda_{\min}(\widecheck{\b{T}}_{M+1})I$, respectively. Using~\cref{eq:spectrumtm}, with $M$ replaced by $M+1$, we have the upper bounds
\begin{equation*}
 \kappa_{M+1} \le 
 \frac{\lambda_{\max}+(M+1)^{5/2}\epsilon_2\| A \|_2}{\lambda_{\min} - (M+1)^{5/2}\epsilon_2\| A \|_2}, \quad 
 \widetilde\kappa_{M+1} \le 
 \frac{\lambda_{\max}+\lambda_{\min}}{2\lambda_{\min} - 2(M+1)^{5/2}\epsilon_2\| A \|_2}.
\end{equation*}
Inserting the residual bound~\cref{eq:resbound} into~\cref{eqn: rhoMbound} yields the final result.
\begin{theorem}[Error bound for finite-precision Lanczos method] With notation and assumptions introduced above, the residual $\vec{\rho}_M$ of the approximation
$X_{\mathtt{lan}} = \widecheck{\b{Q}}_MX_M\widecheck{\b{Q}}_M^T$ to the symmetric Lyapunov equation~\cref{eqn: lowlyapintro} obtained from finite-precision Lanczos method satisfies the bound
 \[   \frac{\| \vec{\rho}_M \|_F}{\|\vec{c} \|_2^2} \leq C_1 \left( \frac{\sqrt{{\widetilde\kappa}_{M+1}}-1}{\sqrt{{\widetilde\kappa}_{M+1}}+1}\right)^M  + C_2 \epsilon_1.\]
 with $C_1 = (1+2\epsilon_0)(M+1)\big(4+4\sqrt{2{\kappa}_{M+1}}\big)$ and $C_2 = \frac{\sqrt{1+2\epsilon_0}M\lambda_{\max}}{\lambda_{\min} - M^{5/2}\epsilon_2\| A \|_2}$. \label{thm:fplanczos}
\end{theorem}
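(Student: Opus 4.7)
The plan is to substitute the perturbed Lanczos decomposition~\cref{eqn: finprelan} directly into the residual definition~\cref{eq:fplanres}. Expanding $A\widecheck{\b{Q}}_M X_M \widecheck{\b{Q}}_M^T$ and its transpose, and factoring out $\widecheck{\b{Q}}_{M+1}=[\widecheck{\b{Q}}_M,\widecheck{\vec{q}}_{M+1}]$ from the left and right, the leading $M\times M$ block becomes $\widecheck{\b{T}}_M X_M + X_M\widecheck{\b{T}}_M - \|\vec{c}\|_2^2 \vec{e}_1\vec{e}_1^T$, which vanishes because $X_M$ solves the projected Lyapunov equation. The surviving pieces are the rank-$2$ Lanczos tail carrying $\widecheck{\beta}_M X_M \vec{e}_M$ together with the symmetric roundoff contribution $F_M X_M \widecheck{\b{Q}}_M^T + \widecheck{\b{Q}}_M X_M F_M^T$, yielding the decomposition~\cref{eqn:rhoM}.

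Next, I would take Frobenius norms: the Lanczos-tail term is bounded using $\|\widecheck{\b{Q}}_{M+1}\|_F^2$ from~\cref{eqn: QMnorm}, and the roundoff term via~\cref{eqn: QMnorm}, the bound~\cref{eq:boundfm} on $\|F_M\|_F$, and $\|X_M\|_F\le \|\vec{c}\|_2^2/(2\lambda_{\min}(\widecheck{\b{T}}_M))$, which follows from the Kronecker-sum representation of the Lyapunov operator together with the positivity of $\widecheck{\b{T}}_M$ guaranteed by assumption~\cref{eq:assumptionlambdamin} and the spectrum bound~\cref{eq:spectrumtm}. This step produces the two-term estimate~\cref{eqn: rhoMbound}, in which every quantity is explicit except for $\|\widecheck{\beta}_M\vec{e}_M^T X_M\|_2$.

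The key remaining step, and the real core of the argument, is to control $\|\widecheck{\beta}_M\vec{e}_M^T X_M\|_2$. Following the idea of~\cite{Dru}, I would reinterpret this as a residual norm for an \emph{exact} Lanczos computation: carrying out $M$ exact Lanczos steps on $\widecheck{\b{T}}_{M+1}$ with starting vector $\|\vec{c}\|_2\vec{e}_1$ reproduces $\widecheck{\b{T}}_M$ and $\widecheck{\beta}_M$ identically, so $\sqrt{2}\|\widecheck{\beta}_M\vec{e}_M^T X_M\|_2$ is precisely the Frobenius-norm residual of the exact-Lanczos approximation to the enlarged projected equation~\cref{eq:enlargedprojected}. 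Assumption~\cref{eq:assumptionlambdamin} combined with~\cref{eq:spectrumtm} ensures $\widecheck{\b{T}}_{M+1}$ is positive definite, so the standard convergence estimate~\cref{eq:resbound} applies; the displayed upper bounds on $\kappa_{M+1}$ and $\widetilde\kappa_{M+1}$ then follow by replacing $M$ with $M+1$ in~\cref{eq:spectrumtm}.

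Inserting~\cref{eq:resbound} into~\cref{eqn: rhoMbound}, dividing by $\|\vec{c}\|_2^2$, and collecting constants gives exactly the two contributions in the theorem with the stated $C_1$ and $C_2$. The main obstacle — essentially the only nonmechanical move — is the reinterpretation of the finite-precision outputs $\widecheck{\b{T}}_M,\widecheck{\beta}_M$ as quantities produced by an exact Lanczos run on $\widecheck{\b{T}}_{M+1}$; everything else is careful bookkeeping that combines the already-stated norm estimates.
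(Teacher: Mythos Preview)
Your proposal is correct and follows essentially the same approach as the paper: substitute~\cref{eqn: finprelan} into~\cref{eq:fplanres} to obtain the decomposition~\cref{eqn:rhoM}, bound in Frobenius norm using~\cref{eq:boundfm}, \cref{eqn: QMnorm}, and $\|X_M\|_F\le \|\vec{c}\|_2^2/(2\lambda_{\min}(\widecheck{\b{T}}_M))$ to reach~\cref{eqn: rhoMbound}, and then reinterpret $\sqrt{2}\|\widecheck{\beta}_M\vec{e}_M^T X_M\|_2$ as the exact-Lanczos residual for~\cref{eq:enlargedprojected} so that the convergence bound~\cref{eq:resbound} applies. The identification of the reinterpretation step as the only nonmechanical move is spot on.
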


Unless $\lambda_{\min}$ is very close to zero, we have that ${\widetilde\kappa}_{M+1} \approx (\lambda_{\max} + \lambda_{\min}) / (2\lambda_{\min})$ and thus the bound of~\Cref{thm:fplanczos} predicts that the residual produced by finite-precision Lanczos method matches the convergence bound from~\cite[Cor. 2.5]{Bec}
until it hits the level of roundoff error.

\subsection{Finite-precision Lanczos with compression}\label{sub: finpreccompress}

We now aim at understanding the impact of roundoff error on Lanczos with compression. Again, we will focus on the effects of the finite-precision Lanczos process and assume that all other computations are carried out exactly. Because~\Cref{alg: lyapcomp} is based on exactly the same Lanczos process, it suffices to study the mathematically equivalent reference method,~\Cref{alg: naive}.

As above, let $\widecheck{\b{T}}_M, \widecheck{\b{Q}}_M$ be the matrices generated by finite-precision Lanczos process and let $\b{U}_{M,k}$ be an orthonormal basis of $\rat(\widecheck{\b{T}}_M,\vec{e}_1,\vec{\xi}_k)$, where $\widecheck{\b{T}}_M$ is the tridiagonal matrix generated by finite-precision Lanczos process. Let $Y_{M,k}$ denote the solution of
\[ \b{U}_{M,k}^T\widecheck{\b{T}}_M\b{U}_{M,k} Y_{M,k} + Y_{M,k}\b{U}_{M,k}^T\widecheck{\b{T}}_M\b{U}_{M,k} = \| \vec{c} \|_2^2(\b{U}_{M,k}^T\vec{e}_1)(\b{U}_{M,k}^T\vec{e}_1)^T.\]
Then the solution produced by the reference method takes the form \[X_{\mathtt{ref}} = \widecheck{\b{Q}}_M\b{U}_{M,k}Y_{M,k}\b{U}_{M,k}^T\widecheck{\b{Q}}_M^T.\]

\begin{theorem}[Error bound for finite-precision Lanczos with compression] \label{thm:fplancomp}
By the notation and assumptions introduced above, the residual for the approximation $X_{\mathtt{ref}}$ returned by~\Cref{alg: naive}, with the Lanczos process carried out in finite-precision arithmetic, satisfies the following bound:
    \[ \frac{\| AX_{\mathtt{ref}} + X_{\mathtt{ref}}A -\vec{c}\vec{c}^T \|_F}{\|\vec{c} \|_2^2} \leq \frac{\| \vec{\rho}_M \|_F}{\|\vec{c} \|_2^2}
    + C_3\cdot \widetilde{\raterr},
    \]
    with $\vec{\rho}_M$ denoting the residual~\cref{eq:fplanres},
    $C_3 = \frac{2(1+2\epsilon_0)M\lambda_{\max}}{\lambda_{\min} - M^{5/2}\epsilon_2\| A \|_2}$, and 
    \[
    \widetilde{\raterr} = \raterr\big(\vec{\xi}_k, \lambda_{\min} - M^{5/2}\epsilon_2\| A \|_2,\lambda_{\max} + M^{5/2}\epsilon_2\| A \|_2\big);
    \]
    defined according to~\cref{eq:raterr}.
\end{theorem}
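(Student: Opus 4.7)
The plan is to write the residual of $X_{\mathtt{ref}}$ as a perturbation of $\vec{\rho}_M$ and then absorb the perturbation using an analogue of \Cref{lem:dru} applied to the finite-precision tridiagonal $\widecheck{\b{T}}_M$. Using the definitions $X_{\mathtt{lan}} = \widecheck{\b{Q}}_M X_M \widecheck{\b{Q}}_M^T$ and $X_{\mathtt{ref}} = \widecheck{\b{Q}}_M \b{U}_{M,k} Y_{M,k} \b{U}_{M,k}^T \widecheck{\b{Q}}_M^T$, I would first write
\[
AX_{\mathtt{ref}} + X_{\mathtt{ref}} A - \vec{c}\vec{c}^T = \vec{\rho}_M + A\Delta + \Delta A, \qquad \Delta := X_{\mathtt{ref}} - X_{\mathtt{lan}},
\]
and combine the triangle inequality with $\|A\|_2 = \lambda_{\max}$ to obtain
\[
\|AX_{\mathtt{ref}} + X_{\mathtt{ref}} A - \vec{c}\vec{c}^T\|_F \le \|\vec{\rho}_M\|_F + 2\lambda_{\max}\|\Delta\|_F.
\]

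Because $\widecheck{\b{Q}}_M$ is not orthonormal in finite precision, its factor cannot be pulled outside the Frobenius norm for free. Instead, submultiplicativity yields
\[
\|\Delta\|_F \le \|\widecheck{\b{Q}}_M\|_2^2 \, \|\b{U}_{M,k} Y_{M,k} \b{U}_{M,k}^T - X_M\|_F,
\]
and \cref{eqn: QMnorm} supplies $\|\widecheck{\b{Q}}_M\|_2^2 \le \|\widecheck{\b{Q}}_M\|_F^2 \le (1+2\epsilon_0)M$. The remaining projected-equation error is handled by \Cref{lem:dru} applied to $\widecheck{\b{T}}_M$ in place of $A$: positive definiteness is guaranteed by assumption~\cref{eq:assumptionlambdamin} together with \cref{eq:spectrumtm}, which also tells us that the relevant interval to use in the lemma is $[\lambda_{\min} - M^{5/2}\epsilon_2\|A\|_2,\, \lambda_{\max} + M^{5/2}\epsilon_2\|A\|_2]$. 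The proof of \Cref{lem:dru} only uses the extremal eigenvalues of the projected tridiagonal, so it applies verbatim and yields
\[
\|\b{U}_{M,k} Y_{M,k} \b{U}_{M,k}^T - X_M\|_F \le \frac{\widetilde{\raterr}}{\lambda_{\min} - M^{5/2}\epsilon_2\|A\|_2}\|\vec{c}\|_2^2.
\]

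Chaining the three estimates and dividing through by $\|\vec{c}\|_2^2$ recovers the claimed bound with $C_3 = 2(1+2\epsilon_0)M\lambda_{\max}/(\lambda_{\min} - M^{5/2}\epsilon_2\|A\|_2)$. The main obstacle is the loss of orthogonality of $\widecheck{\b{Q}}_M$: one cannot simply identify the full-equation error with the projected-equation error, and must pay both a factor $(1+2\epsilon_0)M$ from $\|\widecheck{\b{Q}}_M\|_2^2$ and a widening of the spectral interval of $\widecheck{\b{T}}_M$ — this is precisely what distinguishes the bound above from its exact-arithmetic counterpart \cref{eq:lanref} used in the proof of \Cref{thm:solbound}.
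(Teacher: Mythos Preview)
Your proposal is correct and follows essentially the same route as the paper: split off $\vec{\rho}_M$ via the triangle inequality, bound $\|A\Delta+\Delta A\|_F\le 2\lambda_{\max}\|\Delta\|_F$, control $\|\Delta\|_F$ through $\|\widecheck{\b{Q}}_M\|_F^2\le(1+2\epsilon_0)M$ from~\cref{eqn: QMnorm}, and finish with \Cref{lem:dru} applied to $\widecheck{\b{T}}_M$ on the enlarged spectral interval from~\cref{eq:spectrumtm}. The only cosmetic difference is that the paper expands $\Delta=\widecheck{\b{Q}}_M(\b{U}_{M,k}Y_{M,k}\b{U}_{M,k}^T-X_M)\widecheck{\b{Q}}_M^T$ before bounding, whereas you first isolate $\|\Delta\|_F$ and then expand; the estimates and constants are identical.
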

\begin{proof}
By the triangle inequality, 
\[
 \| AX_{\mathtt{ref}} + X_{\mathtt{ref}}A -\vec{c}\vec{c}^T \|_F \le 
 \| \vec{\rho}_M \|_F + \|A(X_{\mathtt{ref}}- X_{\mathtt{lan}}) + (X_{\mathtt{ref}}- X_{\mathtt{lan}})A\|_F,
\]
with $X_{\mathtt{lan}} = \widecheck{\b{Q}}_MX_M\widecheck{\b{Q}}_M^T$.
The second term is bounded by
\begin{align*}
 & \|A(X_{\mathtt{ref}}- X_{\mathtt{lan}}) + (X_{\mathtt{ref}}- X_{\mathtt{lan}})A\|_F \\
 =\, &
 \|A\widecheck{\b{Q}}_M (\b{U}_{M,k}Y_{M,k}\b{U}_{M,k}^T - X_M) \widecheck{\b{Q}}_M^T + \widecheck{\b{Q}}_M ( \b{U}_{M,k}Y_{M,k}\b{U}_{M,k}^T - X_M) \widecheck{\b{Q}}_M^T A\|_F \\
 \leq\, & 2\lambda_{\max}\| \widecheck{\b{Q}}_M(\b{U}_{M,k}Y_{M,k}\b{U}_{M,k}^T - X_M)\widecheck{\b{Q}}_M^T\|_F \\
 \leq\, & 2(1+2\epsilon_0)\lambda_{\max}M\| \b{U}_{M,k}Y_{M,k}\b{U}_{M,k}^T - X_M\|_F,
\end{align*}
where the last inequality uses~\cref{eqn: QMnorm}.
The expression
$\b{U}_{M,k}Y_{M,k}\b{U}_{M,k}^T - X_M$ is the approximation error of the rational Krylov method applied to the projected equation
\[ \widecheck{\b{T}}_MX_M + X_M\widecheck{\b{T}}_M = \| \vec{c} \|_2^2\vec{e}_1\vec{e}_1^T.\]
    By~\Cref{lem:dru} and~\cref{eq:spectrumtm},
    \[ \frac{\| \b{U}_{M,k}Y_{M,k}\b{U}_{M,k}^T - X_M\|_F}{\|\vec{c}\|_2^2} \leq \frac{\widetilde{\raterr}}{\lambda_{\min} - M^{5/2}\epsilon_2\| A \|_2},\]
    which completes the proof.
\end{proof}

The result of~\Cref{thm:fplancomp} nearly matches the result of~\Cref{thm:fplanczos}, up to the quantity $\widetilde{\raterr}$, which measures the rational approximation error. When choosing Zolotarev poles, this quantity satisfies the bound~\cref{eqn:zolbound} on slightly enlarged intervals.
This implies that the roundoff error during the Lanczos process has a negligible 
impact on the number of Zolotarev poles needed to attain a certain error, because this number depends logarithmically on the condition number.
\section{Experimental results and comparison with existing algorithms}
\label{sec:numexp}

In this section we present some numerical results to compare our~\Cref{alg: lyapcomp}, which will be named \texttt{compress}, to two existing low-memory variants of the Lanczos method for solving Lyapunov equations: two-pass Lanczos method~\cite{Kre2}, named \texttt{two-pass}, and the compress-and-restart method from~\cite{Kre}, named \texttt{restart}. All algorithms are stopped when the estimated norm of the residual is smaller than $\texttt{tol}\cdot \| \vec{c} \|_2^2$ for some prescribed tolerance \texttt{tol}.

 The \texttt{MATLAB} implementation of \texttt{restart} algorithm we employed is available at \href{https://gitlab.com/katlund/compress-and-restart-KSM}{gitlab.com/katlund/compress-and-restart-KSM}. It uses the Arnoldi method with full reorthogonalization to compute orthonormal bases of Krylov subspaces.
For~\cref{alg: lyapcomp} and two-pass Lanczos method, we employed our own \texttt{MATLAB} implementations available at \href{https://github.com/fhrobat/lyap-compress}{github.com/fhrobat/lyap-compress}. 

To ensure a fair comparison of memory requirements, we store the same number of vectors of length $N$ across all three algorithms. In our practical implementations of \texttt{compress}, the algorithms take as input \texttt{maxmem}, which specifies the maximum number of vectors of size $N$ to be held in memory. Initially, $\texttt{maxmem} - 1$ Arnoldi iterations are performed, after which the residual norm is checked and the poles are computed as described in \Cref{sub: praimp}. Once the required number of poles $k$ is determined, the parameter $m$ is chosen such that $\texttt{maxmem} = m + 2k + 1$. Subsequently, the residual norm is checked and compression in \texttt{compress} is performed every $m$ Lanczos iterations. In all our experiments, \texttt{maxmem} is set to $120$.

The projected Lyapunov equation within \texttt{two-pass} is solved using a rational Krylov subspace method with the same Zolotarev poles used in \texttt{compress}. This results in another, much smaller projected equation, which is solved by diagonalizing the projected matrix.
We emphasize that the extreme eigenvalues of $A$ are needed to determine poles. If the extreme eigenvalues of $A$ are not provided as input, \texttt{two-pass} also performs full orthogonalization during the first $\texttt{maxmem} - 1$ Lanczos iterations and then extracts an approximation of $\lambda_{\min}$ and $\lambda_{\max}$.

All experiments are performed using \texttt{MATLAB} R2021a on a machine \texttt{Intel(R) Core(TM) i5-1035G1 CPU @ 1.00GHz} with $4$ cores and a $8$ GB RAM. 
The Zolotarev poles are computed using \texttt{MATLAB} functions \texttt{ellipke} and \texttt{ellipj} (modified in order to take as input $m$ rather than $1 - m^2$ when to compute elliptic functions of elliptical modulus $\sqrt{1-m^2}$).

All numerical experiments are summarized in tables that include the size of $A$, the prescribed tolerance \texttt{tol}, the number of required poles $k$, the number of matrix-vector products and the computational time for each of the three algorithms compared, and the residual norm of the obtained approximate solutions scaled by $1/\|\vec{c}\|_2^2$ (referred as ``scaled residual'').

\subsection{4D Laplacian}
As a first example, we consider the Lyapunov equation that arises from the centered finite-difference discretization of the 4D Laplace operator on the unit hyper-cube $\Omega = [0,1]^4$ with zero Dirichlet boundary conditions. This results in a matrix $A \in \R^{N \times N}$ where $N$ is a square of a natural number, that corresponds to the discretization of the 2D Laplace operator and takes the form  

\[ A = B \otimes I + I \otimes B, \qquad B = (\sqrt{N}+1)^2\begin{bmatrix}
    2 & -1 & & \\
    -1 & \ddots & \ddots & \\
    & \ddots & \ddots & -1 \\
    & & -1 & 2
\end{bmatrix} \in \R^{\sqrt{N} \times \sqrt{N}}.\]
The vector $\vec{c}$ is chosen as the discretization of the function
\[ f(x,y) = \frac{2}{\pi}\exp\big(-2(x-1/2)^2\big)\exp\big(-2(y-1/2)^2\big)\]
on $[0,1]^2$. 
The matrix $A$ and the vector $\vec{c}$ are then scaled by $1/\|\vec{c}\|_2^2$ and $1/\|\vec{c}\|_2$, respectively. This scaling improves the performance of \texttt{restart}, while the \texttt{compress} and \texttt{two-pass} algorithms behave the same regardless of this transformation.

In this experiment, the extreme eigenvalues of $A$ can be computed analytically and are therefore provided directly as input to the algorithm.

The tolerance for compressing the updated right-hand sides within \texttt{restart} is set to the default tolerance indicated in the \texttt{MATLAB} code, that is, $\texttt{tol} \times10^{-4}$.

In \cref{tab:4Dlap} we compare the three different methods for solving the 4D Laplacian problem. Not surprisingly, the number of matrix-vector products of \texttt{compress} is exactly half the number of matrix-vector products of \texttt{two-pass}. 
For this example, \texttt{restart} struggles to converge, due to the repeated compression of the right-hand side.
The time ratio between \texttt{compress} and \texttt{two-pass} is below $1$, demonstrating the advantage of avoiding a second run of the Lanczos process.
On the other hand, it also stays well above $0.5$ because the compression of the Lanczos basis performed within \texttt{compress} has a non-negligible impact on the execution time. In particular, we observe that as $N$ increases, the time ratio also increases. This is primarily because a larger $N$ results in a greater number of poles $k$, due to the wider spread of the eigenvalues of $A$. Under our fixed maximum memory setting, this leads to a smaller number $m$ of Lanczos iterations between two compression steps. As a result, compressions occur more frequently. Furthermore, in this experiment, the cost of performing a matrix-vector product with $A$ is relatively low, which reduces the advantage of the proposed method over the \texttt{two-pass} method.

\begin{table}[t]
\centering
\pgfplotstabletypeset[
  every head row/.style={
    before row=\toprule,after row=\midrule},
  every last row/.style={
    after row=\bottomrule},
display columns/0/.style={column name={$N$}},
display columns/1/.style={column name={\texttt{tol}}},
display columns/2/.style={column name={$k$}},
display columns/3/.style={column name={\makecell[b]{n. matvecs \\ \texttt{compress}
}}},
display columns/4/.style={column name={\makecell[b]{n. matvecs \\ \texttt{two-pass}
}}},
display columns/5/.style={column name={\makecell[b]{n. matvecs \\ \texttt{restart}
}}},
display columns/6/.style={column name={\makecell[b]{scaled residual \\ \texttt{compress} and \\ \texttt{two-pass}
}}},
string type
]
{4Dlap_matvec.dat}

\vspace{1em}

% Second table: cols 1,6,7,8,9
\pgfplotstabletypeset[
  every head row/.style={
    before row=\toprule,after row=\midrule},
  every last row/.style={
    after row=\bottomrule},
display columns/0/.style={column name={$N$}},
display columns/1/.style={column name={\texttt{tol}}},
display columns/2/.style={column name={$k$}},
display columns/3/.style={column name={\makecell[b]{time \\ \texttt{compress}
}}},
display columns/4/.style={column name={\makecell[b]{time \\ \texttt{two-pass}
}}},
display columns/5/.style={column name={\makecell[b]{time \\ \texttt{restart}
}}},
display columns/6/.style={column name={\makecell[b]{time ratio \\ \texttt{compress}/\texttt{two-pass}
}}},
string type
]
{4Dlap_time.dat}

\caption{Matrix-vector products (top) and execution times (bottom) required to solve the Lyapunov equation arising from the 4D Laplace equation using three different low-memory methods. The scaled residual for \texttt{restart} in the first  row is equal to $9.4 \times 10^{-7}$.}
\label{tab:4Dlap}
\end{table}

\subsection{Model order reduction: Example 1}
This example originates from the \texttt{FEniCS Rail} model\footnote{\url{https://morwiki.mpi-magdeburg.mpg.de/morwiki/index.php/FEniCS_Rail}}:
\[
    \begin{cases}
    E\dot{x}(t) = Mx(t) + Bu(t), \\
    y(t) = Cx(t),
    \end{cases}
\]
where $M, E \in \mathbb{R}^{N \times N}$ are symmetric positive definite matrices and $B \in \mathbb{R}^{N}$. Applying balanced truncation model reduction to this system requires solving a Lyapunov equation of the form
\begin{equation} \label{eq:glyap}
 (-L^{-1}ML^{-T})X + X (-L^{-1}ML^{-T}) = (-L^{-1}B)(-L^{-1}B)^T,
\end{equation}
where $E = LL^T$ is the Cholesky decomposition of $E$.

In practice, the matrix $E$ is first reordered using nested dissection, as implemented in \texttt{MATLAB}, followed by a sparse Cholesky decomposition.

Here, the vector $B$ is chosen as the first column of the input matrix provided by the \texttt{FEniCS Rail} model. Since the norm of $B$ is very small, the tolerance for compression in \texttt{restart} is set to machine precision, denoted by \texttt{eps}. 

In this example, the time ratio between \texttt{compress} and \texttt{two-pass} is close to $0.5$, which is the ratio of matrix-vector products. This is because the matrix-vector product becomes more expensive: applying the matrix $A = -L^{-1}ML^{-T}$ requires a multiplication with $M$ and the solution of two sparse triangular systems, which is computationally intensive. As a result, the execution time of the compression becomes negligible compared to that of the Lanczos process.

Lastly, we note that in the $N = 79{,}841$ case, the scaled residual norm is slightly larger than \texttt{tol}. This is due to a poor estimate of the smallest eigenvalue of $A$ during the first cycle.

\begin{table}[t]
\centering
\pgfplotstabletypeset[
  every head row/.style={
    before row=\toprule,after row=\midrule},
  every last row/.style={
    after row=\bottomrule},
display columns/0/.style={column name={$N$}},
display columns/1/.style={column name={\texttt{tol}}},
display columns/2/.style={column name={$k$}},
display columns/3/.style={column name={\makecell[b]{n. matvecs \\ \texttt{compress}
}}},
display columns/4/.style={column name={\makecell[b]{n. matvecs \\ \texttt{two-pass}
}}},
display columns/5/.style={column name={\makecell[b]{n. matvecs \\ \texttt{restart}
}}},
display columns/6/.style={column name={\makecell[b]{scaled residual \\ \texttt{compress} and \\ \texttt{two-pass}
}}},
string type
]
{rail_matvec.dat}

\vspace{1em}

\pgfplotstabletypeset[
  every head row/.style={
    before row=\toprule,after row=\midrule},
  every last row/.style={
    after row=\bottomrule},
display columns/0/.style={column name={$N$}},
display columns/1/.style={column name={\texttt{tol}}},
display columns/2/.style={column name={$k$}},
display columns/3/.style={column name={\makecell[b]{time \\ \texttt{compress}
}}},
display columns/4/.style={column name={\makecell[b]{time \\ \texttt{two-pass}
}}},
display columns/5/.style={column name={\makecell[b]{time \\ \texttt{restart}
}}},
display columns/6/.style={column name={\makecell[b]{time ratio \\ \texttt{compress}/\texttt{two-pass}
}}},
string type
]
{rail_time.dat}

\caption{Matrix-vector products (top) and execution times (bottom) required to solve the Lyapunov equation arising from the \texttt{FEniCS Rail} model order reduction problem using three different low-memory methods. The scaled residual for \texttt{restart} in the first and second row is $1.0 \times 10^{-3}$ and $7.3 \times 10^{-4}$ respectively.}
\end{table}

\subsection{Model order reduction: Example 2}
This is a variation of the previous example, now using the data from~\cite[Experiment~3]{Ben3}. As before, balanced truncation model reduction is applied to a system of the form
\[
E\dot{T}(t) = \left(M - \sum_{i=1}^t \alpha_i F_i\right)T(t) + Bu(t),
\]
where $E, M, F_i \in \mathbb{R}^{N \times N}$ for each $i$ and $B \in \mathbb{R}^N$,
which leads to a Lyapunov equation of the form~\cref{eq:glyap}. 
The vector $B$ is chosen as the first column of the input matrix, and the matrix $E$ is now diagonal. As in the previous example, the tolerance for compression in \texttt{restart} is set to \texttt{eps}. The coefficients $\alpha_i$ are set to $10$. Note that as $N$ changes, the integer $t$ and the matrices $F_i$ also change, corresponding to different Neumann boundary conditions.

Similarly to the 4D Laplacian, the matrix-vector products with $A$ are computationally efficient due to the diagonal structure of $E$. As a result, \texttt{two-pass} is competitive with \texttt{compress}, since compression steps take a significant amount of time relative to the Lanczos iterations, especially when more poles are required and thus compression occurs more frequently.

\begin{table}[t]
\centering
\pgfplotstabletypeset[
  every head row/.style={
    before row=\toprule,after row=\midrule},
  every last row/.style={
    after row=\bottomrule},
display columns/0/.style={column name={$N$}},
display columns/1/.style={column name={\texttt{tol}}},
display columns/2/.style={column name={$k$}},
display columns/3/.style={column name={\makecell[b]{n. matvecs \\ \texttt{compress}
}}},
display columns/4/.style={column name={\makecell[b]{n. matvecs \\ \texttt{two-pass}
}}},
display columns/5/.style={column name={\makecell[b]{n. matvecs \\ \texttt{restart}
}}},
display columns/6/.style={column name={\makecell[b]{scaled residual \\ \texttt{compress} and \\ \texttt{two-pass}
}}},
string type
]
{thermal_matvec.dat}

\vspace{1em}

\pgfplotstabletypeset[
  every head row/.style={
    before row=\toprule,after row=\midrule},
  every last row/.style={
    after row=\bottomrule},
display columns/0/.style={column name={$N$}},
display columns/1/.style={column name={\texttt{tol}}},
display columns/2/.style={column name={$k$}},
display columns/3/.style={column name={\makecell[b]{time \\ \texttt{compress}
}}},
display columns/4/.style={column name={\makecell[b]{time \\ \texttt{two-pass}
}}},
display columns/5/.style={column name={\makecell[b]{time \\ \texttt{restart}
}}},
display columns/6/.style={column name={\makecell[b]{time ratio \\ \texttt{compress}/\texttt{two-pass}
}}},
string type
]
{thermal_time.dat}

\caption{Matrix-vector products (top) and execution times (bottom) required to solve the Lyapunov equation arising from the model order reduction problem proposed in~\cite[Experiment $3$]{Ben3} using three different low-memory methods.}
\end{table}

\section{Conclusions}
We have presented a new algorithm for solving large-scale symmetric Lyapunov equations with low-rank right-hand sides. Inspired by previous work~\cite{Cas} on matrix functions, our algorithm performs compression to mitigate the excessive memory required when using a (slowly converging) Lanczos method. Our convergence analysis quantifies the impact of compression on convergence and shows that it remains negligible. Our analysis also quantifies the impact of the loss of orthogonality, due to roundoff error, for both the standard Lanczos method and our new algorithm. Numerical experiments confirm the advantages of compression over existing low-memory Lanczos methods.

\section*{Acknowledgments}
The authors are grateful to Igor Simunec: conversations with him contributed to improving the presentation of the paper.
Part of this work was performed while Francesco Hrobat was staying at EPFL.
Angelo A. Casulli is a member of the INdAM-GNCS research group.  
He has been supported by the National Research Project (PRIN) ``FIN4GEO: Forward and Inverse Numerical Modeling of Hydrothermal Systems in Volcanic Regions with Application to Geothermal Energy Exploitation'' and by the INdAM-GNCS project ``NLA4ML---Numerical Linear Algebra Techniques for Machine Learning.''

\bibliography{references}

\begin{thebibliography}{10}

\bibitem{Ant}
{\sc A.~C. Antoulas}, {\em Approximation of large-scale dynamical systems}, vol.~6 of Advances in Design and Control, Society for Industrial and Applied Mathematics (SIAM), Philadelphia, PA, 2005.

\bibitem{Bec}
{\sc B.~Beckermann}, {\em An error analysis for rational {G}alerkin projection applied to the {S}ylvester equation}, SIAM J. Numer. Anal., 49 (2011), pp.~2430--2450.

\bibitem{Bec2}
{\sc B.~Beckermann, A.~Cortinovis, D.~Kressner, and M.~Schweitzer}, {\em Low-rank updates of matrix functions {II}: rational {K}rylov methods}, SIAM J. Numer. Anal., 59 (2021), pp.~1325--1347.

\bibitem{Bec3}
{\sc B.~Beckermann and A.~Townsend}, {\em Bounds on the singular values of matrices with displacement structure}, SIAM Rev., 61 (2019), pp.~319--344.

\bibitem{Ben}
{\sc P.~Benner, A.~Cohen, M.~Ohlberger, and K.~Willcox}, {\em Model reduction and approximation, theory and algorithms}, vol.~15 of Computational Science \& Engineering, Society for Industrial and Applied Mathematics (SIAM), Philadelphia, PA, 2017.

\bibitem{MR2557293}
{\sc P.~Benner, R.-C. Li, and N.~Truhar}, {\em On the {ADI} method for {S}ylvester equations}, J. Comput. Appl. Math., 233 (2009), pp.~1035--1045.

\bibitem{Ben2}
{\sc P.~Benner, V.~Mehrmann, and D.~C. Sorensen}, {\em Dimension reduction of large-scale systems}, Springer, 2003.

\bibitem{Ben3}
{\sc P.~Benner, D.~Palitta, and J.~Saak}, {\em On an integrated {K}rylov-{ADI} solver for large-scale {L}yapunov equations}, Numer. Algorithms, 92 (2023), pp.~35--63.

\bibitem{Bernstein2000}
{\sc D.~S. Bernstein and C.~F. Van~Loan}, {\em Rational matrix functions and rank-1 updates}, SIAM J. Matrix Anal. Appl., 22 (2000), pp.~145--154.

\bibitem{casThesis}
{\sc A.~A. Casulli}, {\em Block rational {K}rylov methods for matrix equations and matrix functions}, PhD thesis, Scuola Normale Superiore, Pisa, Italy, 2024.

\bibitem{Cas2}
{\sc A.~A. Casulli and L.~Robol}, {\em An efficient block rational {K}rylov solver for {S}ylvester equations with adaptive pole selection}, SIAM J. Sci. Comput., 46 (2024), pp.~A798--A824.

\bibitem{Cas}
{\sc A.~A. Casulli and I.~Simunec}, {\em A low-memory {L}anczos method with rational {K}rylov compression for matrix functions}, {arXiv:2403.04390}, 2024.

\bibitem{Che}
{\sc T.~Chen, A.~Greenbaum, C.~Musco, and C.~Musco}, {\em Error bounds for {L}anczos-based matrix function approximation}, SIAM J. Matrix Anal. Appl., 43 (2022), pp.~787--811.

\bibitem{Dat}
{\sc B.~N. Datta}, {\em Linear and numerical linear algebra in control theory: some research problems}, Linear Algebra Appl., 197/198 (1994), pp.~755--790.

\bibitem{Dru}
{\sc V.~Druskin, A.~Greenbaum, and L.~Knizhnerman}, {\em Using nonorthogonal {L}anczos vectors in the computation of matrix functions}, SIAM J. Sci. Comput., 19 (1998), pp.~38--54.

\bibitem{Dru2}
{\sc V.~Druskin, L.~Knizhnerman, and V.~Simoncini}, {\em Analysis of the rational {K}rylov subspace and {ADI} methods for solving the {L}yapunov equation}, SIAM J. Numer. Anal., 49 (2011), pp.~1875--1898.

\bibitem{Ell}
{\sc N.~S. Ellner and E.~L. Wachspress}, {\em Alternating direction implicit iteration for systems with complex spectra}, SIAM J. Numer. Anal., 28 (1991), pp.~859--870.

\bibitem{Els}
{\sc S.~Elsworth and S.~G\"uttel}, {\em The block rational {A}rnoldi method}, SIAM J. Matrix Anal. Appl., 41 (2020), pp.~365--388.

\bibitem{Gaj}
{\sc Z.~Gajic and M.~T.~J. Qureshi}, {\em Lyapunov matrix equation in system stability and control}, vol.~195 of Mathematics in Science and Engineering, Academic Press, Inc., San Diego, CA, 1995.

\bibitem{Golub2013}
{\sc G.~H. Golub and C.~F. Van~Loan}, {\em Matrix computations}, Johns Hopkins Studies in the Mathematical Sciences, Johns Hopkins University Press, Baltimore, MD, fourth~ed., 2013.

\bibitem{Gru}
{\sc L.~Grubi\v{s}i\'c and D.~Kressner}, {\em On the eigenvalue decay of solutions to operator {L}yapunov equations}, Systems Control Lett., 73 (2014), pp.~42--47.

\bibitem{Gut}
{\sc S.~G\"{u}ttel}, {\em Rational {K}rylov Methods for Operator Functions}, PhD thesis, Technische Universit\"{a}t Bergakademie Freiberg, Germany, 2010.

\bibitem{Gut2}
{\sc S.~G\"uttel, D.~Kressner, and K.~Lund}, {\em Limited-memory polynomial methods for large-scale matrix functions}, GAMM-Mitt., 43 (2020), pp.~e202000019, 19.

\bibitem{Gut3}
{\sc S.~G\"uttel and M.~Schweitzer}, {\em A comparison of limited-memory {K}rylov methods for {S}tieltjes functions of {H}ermitian matrices}, SIAM J. Matrix Anal. Appl., 42 (2021), pp.~83--107.

\bibitem{Jai}
{\sc I.~M. Jaimoukha and E.~M. Kasenally}, {\em Krylov subspace methods for solving large {L}yapunov equations}, SIAM J. Numer. Anal., 31 (1994), pp.~227--251.

\bibitem{Jbi}
{\sc K.~Jbilou and A.~J. Riquet}, {\em Projection methods for large {L}yapunov matrix equations}, Linear Algebra Appl., 415 (2006), pp.~344--358.

\bibitem{Kre2}
{\sc D.~Kressner}, {\em Memory-efficient {K}rylov subspace techniques for solving large-scale {L}yapunov equations}, in 2008 IEEE International Conference on Computer-Aided Control Systems, 2008.

\bibitem{Kre}
{\sc D.~Kressner, K.~Lund, S.~Massei, and D.~Palitta}, {\em Compress-and-restart block {K}rylov subspace methods for {S}ylvester matrix equations}, Numer. Linear Algebra Appl., 28 (2021), pp.~Paper No. e2339, 17.

\bibitem{MR1920565}
{\sc J.-R. Li and J.~White}, {\em Low rank solution of {L}yapunov equations}, SIAM J. Matrix Anal. Appl., 24 (2002), pp.~260--280.

\bibitem{Pai}
{\sc C.~C. Paige}, {\em Error analysis of the {L}anczos algorithm for tridiagonalizing a symmetric matrix}, J. Inst. Math. Appl., 18 (1976), pp.~341--349.

\bibitem{Pai3}
\leavevmode\vrule height 2pt depth -1.6pt width 23pt, {\em Accuracy and effectiveness of the {L}anczos algorithm for the symmetric eigenproblem}, Linear Algebra Appl., 34 (1980), pp.~235--258.

\bibitem{Pai2}
\leavevmode\vrule height 2pt depth -1.6pt width 23pt, {\em An augmented stability result for the {L}anczos {H}ermitian matrix tridiagonalization process}, SIAM J. Matrix Anal. Appl., 31 (2010), pp.~2347--2359.

\bibitem{Pal}
{\sc D.~Palitta and V.~Simoncini}, {\em Matrix-equation-based strategies for convection-diffusion equations}, BIT, 56 (2016), pp.~751--776.

\bibitem{MR1742324}
{\sc T.~Penzl}, {\em A cyclic low-rank {S}mith method for large sparse {L}yapunov equations}, SIAM J. Sci. Comput., 21 (1999/00), pp.~1401--1418.

\bibitem{Saa}
{\sc Y.~Saad}, {\em Numerical solution of large {L}yapunov equations}, in Signal processing, scattering and operator theory, and numerical methods ({A}msterdam, 1989), vol.~5 of Progr. Systems Control Theory, Birkh\"auser Boston, Boston, MA, 1990, pp.~503--511.

\bibitem{Sim}
{\sc V.~Simoncini}, {\em Computational methods for linear matrix equations}, SIAM Rev., 58 (2016), pp.~377--441.

\bibitem{Sim2}
{\sc V.~Simoncini and V.~Druskin}, {\em Convergence analysis of projection methods for the numerical solution of large {L}yapunov equations}, SIAM J. Numer. Anal., 47 (2009), pp.~828--843.

\end{thebibliography}
\bibliographystyle{siam}

\end{document}